\documentclass{amsart}
\usepackage{amsfonts,latexsym,graphicx,amssymb,amsmath,url,amsthm}
\newtheorem{theorem}{Theorem}
\newtheorem{lemma}{Lemma}
\newtheorem{corollary}{Corollary}
\newtheorem{proposition}{Proposition}
\newtheorem{conjecture}{Conjecture}

\theoremstyle{definition}
\theoremstyle{remark}
\newtheorem*{remark}{{\bf{Remark}}}
\newcommand{\N}{\mathbb N}
\newcommand{\Z}{\mathbb Z}

\begin{document}

\title{
Hilbert cubes in arithmetic sets}

\author{Rainer Dietmann}
\address{Department of Mathematics,
Royal Holloway, University of London, Egham, TW20 0EX Surrey, UK}
\email{Rainer.Dietmann@rhul.ac.uk}
\author{Christian Elsholtz}
\address{Institut f\"ur Analysis und Computational Number Theory,
Technische Universit\"at Graz,
Steyrergasse 30,
A-8010 Graz, Austria}
\email{elsholtz@math.tugraz.at}

\date{\today}
\begin{abstract}
We show upper bounds on the maximal dimension $d$
of Hilbert cubes 
$H=a_0+\{0,a_1\}+\cdots + \{0, a_d\}\subset S \cap [1, N]$ 
in several sets $S$ of arithmetic interest.

\begin{itemize}
\item[a)]
For the set of squares
we obtain  $d=O(\log \log N)$. Using previously known methods this bound
could have been achieved only conditionally subject to an unsolved 
problem of Erd\H{o}s and Rado.
\item[b)]
For the set $W$ of powerful numbers we show $d=O((\log N)^2)$.
\item[c)]
For the set $V$ of pure powers we also show $d=O((\log N)^2)$,
but for a homogeneous Hilbert cube, with $a_0=0$,
this can be improved to 
$d=O((\log \log N)^3/\log \log \log N)$,
when the $a_i$ are distinct, and 
$d=O((\log \log N)^4/(\log \log \log N)^2)$,
generally. 
This compares with a result of
$d = O((\log N)^3/(\log \log N)^{1/2})$ in the literature.
\item[d)] For the set $V$ we also solve an open problem of
Hegyv\'ari and S\'ark\"ozy, 
namely we show that $V$ does not contain an infinite Hilbert cube.
\item[e)]
For a set without arithmetic progressions of length $k$ we prove
$d=O_k(\log N)$, which is close to the true order of magnitude.
\end{itemize}
\end{abstract}
\subjclass[2010]{primary: 11B75; secondary: 11B05, 11B25, 11P70}
\thanks{Acknowledgement: 
C. Elsholtz was partially supported by FWF-DK Discrete Mathematics Project W1230.}
\maketitle
\section{Introduction}
\subsection{Results on progression-free sets and squares,
discussion of methods}
If $a_0 \ne 0, a_1, \ldots, a_d$ are elements of an additive group, we define
\begin{align*}
   H(a_0;a_1, \ldots , a_d) & :=a_0+\{0,a_1\}+\cdots + \{0, a_d\} \\
   & = \left\{ a_0 + \sum_{i=1}^d \varepsilon_i a_i : \varepsilon_i \in \{0,1\}
   \right\}
\end{align*}
to be
a Hilbert cube of dimension $d$. Note that here and in the following
we do not require the $a_i$ to be distinct, unless noted explicitly.
For $a_0=0$ it is convenient to slightly amend the definition to
\[
   H(0;a_1, \ldots , a_d)  :=
   \left\{ \sum_{i=1}^d \varepsilon_i a_i : \varepsilon_i \in \{0,1\},
   \quad \sum_{i=1}^d \varepsilon_i>0
   \right\},
\]
excluding the empty sum. This special case of a homogeneous Hilbert cube
is also often called the set of `subset sums'. Moreover, one can in an
obvious way extend these definitions to infinite Hilbert cubes, imposing
the additional condition that only finitely many $\varepsilon_i$ are different
from zero. Homogeneous infinite Hilbert cubes are also known in the literature 
as `finite sums', or as `IP sets' (see for example \cite{Bergelson:2010}). 
For an infinite set $A$ one writes this as
\[FS(A)=  \left\{ \sum_{i} \varepsilon_i a_i : \varepsilon_i \in \{0,1\},
   \quad 0< \sum_{i} \varepsilon_i < \infty \right\}.\]
In this paper, though, our focus will mainly, but not
exclusively, be on finite Hilbert cubes.

There are many questions connecting the size $d$ with the number of 
distinct elements in a $d$-dimensional cube. This topic 
is closely connected to questions on `sumset growth'
studied in additive combinatorics.
The question of the maximal size $d$ such that there is some Hilbert cube 
$H(a_0;a_1, \ldots , a_d)$ 
in a given set $S$ of integers has been 
frequently studied. Also, a number of different methods have been used
in these investigations.

Given a set $S \subset \{1, \ldots, N\}$
and
\[
  H(0; a_1, \ldots, a_{d}) \subset S
\]
for positive $a_i$, then also
\[
  H(a_1; a_2, \ldots, a_d) \subset S,
\]
so the maximal dimension of a subset sum
in $S$ clearly is less than or equal to one plus the maximal dimension of a
Hilbert cube in $S$.
In general, it seems, the case of subset sums might be easier.
At least, certain methods developed for subset sums, 
see e.g. \cite{Csikvari:2008}, cannot be extended to the general case.

In this paper we focus on questions, where the set $S$ has some interesting
arithmetic meaning,  
such as the set of squares, or the set of powerful numbers.
Here the results and methods by 
Hegyvari and S\'{a}rk\"{o}zy \cite{HegyvariandSarkozy:1999}
and Gyarmati, S\'{a}rk\"{o}zy and Stewart \cite{GSS:2003} have been influential.
As we will observe,
the study of these arithmetic sets
 is closely related to questions on sets $S$ which do
not contain long arithmetic progressions.
There are a number of investigations, connecting sumsets and subset sums
with the length of arithmetic progressions or studying sumsets in the set of
squares. We will mention some of these results below,
but there is further work by 
Lagarias, Odlyzko and Shearer \cite{LagariasandOdlyzkoandShearer:1983},
S\'{a}rk\"{o}zy \cite{Sarkozy:1994}, Erd\H{o}s and S\'{a}rk\"ozy
\cite{ErdosandSarkozy:1992}, Ruzsa \cite{Ruzsa:1991},
 Szemer\'{e}di and Vu \cite{SzemerediandVu:2006, SzemerediandVu:2006JAMS},
Khalfalah and Szemer\'{e}di \cite{KhalfalahandSzemeredi:2006},
Nguyen and Vu \cite{NguyenandVu:2010}.

Further, the study of combinatorial aspects of Hilbert cubes 
has a long tradition, see for example Hilbert \cite{Hilbert:1892},
Hegyv\'{a}ri, \cite{Hegyvari:1996, Hegyvari:1999},   
Gunderson, R\"{o}dl and Sidorenko \cite {GundersonandRodlandSidorenko:1999},
and the very recent result by Conlon, Fox and Sudakov \cite{ConlonFoxSudakov}.

A connection of the maximal dimension of a Hilbert cube with the
complexity of sequences has been investigated by Woods \cite{Woods:2004}.
For example he showed that an upper bound on the dimension of a
Hilbert cube in the set of primes provides a lower bound on the complexity
of prime testing. While we do not follow this path here,
it can be observed that the existence of a high dimensional
Hilbert cube, say of dimension $d$, and of size $|H|> c^d$, with $c>1$,
(compare Lemma \ref{growth}) 
would allow to store these more than $c^d$ elements very efficiently, 
just by storing
the $d+1$ base elements. From this perspective, and in view of the philosophy
behind the famous conjecture of Erd\H{o}s and Szemer\'{e}di 
\cite{Erdos-Szemeredi:1983} that sets of
integers do not simultaneously contain a large additive and a large 
multiplicative structure, it seems reasonable to expect that for 
multiplicatively defined sets, such as those studied in this paper,
 such an efficient (additively defined) saving is
only possible for quite small subsets.

The connection of Hilbert cubes and arithmetic progressions
has been famously studied 
by Szemer\'{e}di \cite{Szemeredi:1969} and more recently by
Gowers \cite{Gowers:2011}, who introduced in this
connection what is now known as the Gowers uniformity norm,
and in the paper by Green and Tao \cite{GreenandTao:2008}
on long progressions in the primes.

In this paper we make use of a variety of methods, some of which have been
used before in connection with subset sums, but had to be adapted in order to
make use of them for the more general Hilbert cubes.
In our earlier paper \cite{DietmannandElsholtz:2012}
we had introduced a new method, based on the study of sumset growth for
progression-free sets. This led to an almost exponential sumset growth.
In this paper we introduce a tool, Lemma \ref{growth} below,
to the study of Hilbert cubes,  
which extends a recent method of Schoen \cite{Schoen:2011}, and which
eventually achieves exponential sumset growth 
for the sets under consideration.
For the set of squares and progression-free sets
(Theorems \ref{thm:squares} and \ref{thm:kprogression})
this improves, and simplifies, our work in \cite{DietmannandElsholtz:2012}.
The two methods above make use of global properties of the set
$S$, such as avoiding arithmetic progressions.

To handle the more delicate and less structured set of pure powers
it seems best to make in addition use of local obstructions modulo primes.
Here a number of estimates on primes, 
including Linnik's theorem and sieve methods, have to be used.
This route had been developed by Gyarmati, S\'{a}rk\"{o}zy and Stewart 
\cite{GSS:2003} for the study of subset sums.

The combination of these methods seems stronger than any of these methods alone.
After these prolegomena on methods let us turn our attention to the arithmetic
sets under consideration.

Brown, Erd\H{o}s and Freedman \cite{BrownandErdosandFreedman:1990}
asked whether the maximal
dimension of a Hilbert cube in the set
\[
  S_2 = \{n^2 : n \in \mathbb{N}\}
\]
of integer squares
is absolutely bounded or not. Experimentally, one finds only very small cubes
such as 
\[
   1+\{0,528\}+\{0,840\}+\{0,840\}
   = \{1^2, 23^2, 29^2, 29^2, 37^2, 37^2, 41^2, 47^2\}.
\]
Observe that in this example
$29^2$ and $37^2$ occur as sums in two different ways.
Cilleruelo and Granville \cite{CillerueloandGranville:2007},
Solymosi \cite{Solymosi:2007} and Alon, Angel, Benjamini and Lubetzky
\cite{AlonandAngelandBenjaminiandLubetzky:2012}
explain that the Bombieri-Lang conjecture 
implies that $d$ is absolutely bounded.
The study of sumsets $A+B$ being a subset of the set of integer squares 
$S_2\cap [1, N]$ is well understood when $|A|=2$. Here $a_2+b_i=x^2$ and
$a_1+b_i=y^2$ gives $a_2-a_1=x^2-y^2=(x+y)(x-y)$.
On the other hand, for any factorization $a_2 - a_1=d_1 d_2$ with $d_1>d_2>0$
of the same parity, one finds
$x=\frac{d_1+d_2}{2}, y=\frac{d_1-d_2}{2}$, and hence
$b_i=x^2-a_2=y^2-a_1$.
Given $a_1, a_2$ and the factorizations of $a_2-a_1$ one finds the 
corresponding set $B$.
For a fixed set $A=\{a_1, a_2\}$ with $a_1 \ne a_2$
there exist only finitely many 
possible values of corresponding $x,y$ and therefore only a finite set $B$.
On the other hand, when the elements $a_1,a_2$ can vary in $[1,N]$,
the size of $|B|$ can be as large as the divisor function 
$\frac{\tau(a_2-a_1)}{2}$, i.e.
\[
  |B| \leq \exp \left((\log 2 +o(1))\frac{\log N}{\log \log N}\right)
\]
is essentially the best possible general bound
(see \cite{RSS}, Theorem 4 and its proof).
When $|A|=3$ the problem of studying $A+B\subset S_2$ goes back to Euler.
For some historical explanation, and for relating the problem to elliptic
curves see Alon, Angel, Benjamini and Lubetzky
\cite{AlonandAngelandBenjaminiandLubetzky:2012}, for some extension see 
Dujella and Elsholtz \cite{DujellaandElsholtz:2013}.
The case of $|A| \geq 4$ leads to curves of genus $g\geq 2$,
which is the reason why the Bombieri-Lang conjecture is related to it.
We do not follow this path here.
Hegyv\'ari and S\'ark\"ozy
(\cite{HegyvariandSarkozy:1999}, Theorem 1) proved that for the set of 
integer squares
$S_2\cap [1, N]$ the maximal dimension of a Hilbert cube
is bounded by $d=O((\log N)^{1/3})$.
In a previous paper (\cite{DietmannandElsholtz:2012}, Theorem 3) we
improved this to $d=O((\log \log N)^2)$. Here we further reduce that bound.

\begin{theorem}[Hilbert cubes in the squares]{\label{thm:squares}}
Let $S_2$ denote the set of integer squares.
Let $N$ be sufficiently large, let 
$a_0$ be a non-negative integer and
let $A=\{a_1, \ldots, a_d\}$ be a set of positive integers
such that 
$H(a_0; a_1, \ldots , a_d)\subset S_2\cap [1,N]$. Then
\[
  d \leq 7 \log \log N.
\]
\end{theorem}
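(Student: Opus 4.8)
The plan is to exploit the rigidity of the squares under the basic identity $x^2-y^2=(x+y)(x-y)$, combined with the observation that a Hilbert cube of dimension $d$ forces many coincidences among its $2^d$ vertices. First I would set up the counting: if $H(a_0;a_1,\dots,a_d)\subset S_2\cap[1,N]$, then each of the $2^d$ subset sums $a_0+\sum_{i\in I}a_i$ is a square lying in $[1,N]$, hence of the form $n_I^2$ with $1\le n_I\le N^{1/2}$. Fixing a coordinate, say $i=d$, and pairing the vertex with $\varepsilon_d=0$ to the vertex with $\varepsilon_d=1$ that agrees in all other coordinates, I get $2^{d-1}$ relations $n_J^2-m_J^2=a_d$ where $J$ ranges over subsets of $\{1,\dots,d-1\}$ and $m_J,n_J$ depend on $J$. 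Thus $a_d$ has at least $2^{d-1}$ representations as a difference of two squares, and since distinct $J$ give distinct values of $a_0+\sum_{i\in J}a_i$ and hence distinct $m_J$, these representations are genuinely distinct. But a positive integer $a_d$ has only $O(\tau(a_d))$ representations as a difference of two squares (each corresponds to a factorization $a_d=d_1d_2$ with $d_1\ge d_2>0$ of equal parity), so $2^{d-1}\le \tau(a_d)\ll_\varepsilon a_d^\varepsilon\le N^\varepsilon$, which already gives $d=O(\log N/\log\log N)$ — the Hegyvári–Sárközy-type bound, but not yet the target.

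To reach $d\le 7\log\log N$ one must do much better than passing through a single coordinate, and here is where Lemma \ref{growth} enters. The idea is that the $d$ generators $a_1,\dots,a_d$ cannot be too additively structured: if they were, the cube would contain long arithmetic progressions, which are sparse in the squares (the set of squares contains no four-term progression, by Fermat, and more quantitatively $S_2\cap[1,N]$ has only $O(N^{1/2})$ elements). So I would argue that $S_2\cap[1,N]$, being both progression-poor and thin, cannot contain a Hilbert cube whose sumset grows too fast. Concretely, Lemma \ref{growth} should say that a $d$-dimensional Hilbert cube has at least $c^d$ distinct elements for some $c>1$ (this is the ``exponential sumset growth'' advertised in the introduction as an extension of Schoen's method); since all these elements are squares in $[1,N]$, we get $c^d\le |S_2\cap[1,N]|\le N^{1/2}$, whence $d=O(\log N)$. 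That is still not enough — the real gain must come from iterating or from combining the multiplicative difference-of-squares structure with the growth lemma, bootstrapping the bound down from $\log N$ to $\log\log N$.

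The cleanest route to the doubly-logarithmic bound, which I would pursue, is a descent. From the difference-of-squares analysis applied not to one coordinate but to a well-chosen $k$-dimensional subcube spanned by $a_{i_1},\dots,a_{i_k}$: the $2^k$ vertices of that subcube, shifted by a common base, are squares, so by the $|A|=2,3,\dots$ analysis sketched in the introduction (where $|A|\ge 4$ runs into genus $\ge 2$ curves) one cannot have too many such vertices unless there is a hidden factorization constraint. Quantitatively, I expect the key inequality to take the shape: the number of distinct squares in a $d$-dimensional Hilbert cube in $[1,N]$ is at least $2^{d}/D$ where $D$ counts multiplicity, while simultaneously each generator $a_i\le N$ has $\le\tau(a_i)$ difference-of-squares representations; feeding $\sum_i$ of these bounds against the growth lower bound $c^d$ and using $\tau(n)=N^{o(1)}$ should collapse to $2^{d}\le (\log N)^{O(1)}$ after one application, and then a second application of the same mechanism — now with $N$ replaced by roughly $\log N$ because the generators of any surviving subcube are themselves forced into a much smaller range — gives $2^d\le (\log\log N)^{O(1)}$, i.e. $d\le 7\log\log N$ with the constant tuned at the end.

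The main obstacle I anticipate is making the second step of the descent rigorous: one needs to show that after the first round the generators $a_i$ of the relevant subcube can be taken to lie in an interval of length $(\log N)^{O(1)}$ rather than $[1,N]$, so that $\tau(a_i)$ (or $2^{\omega(a_i)}$) is bounded by $(\log\log N)^{O(1)}$ rather than $N^{o(1)}$. This localization is exactly where Lemma \ref{growth} must be invoked with care — it is the step that converts ``no long progression'' plus ``thin set'' into an honest bound on the size of the generators, not merely on their number — and getting the bookkeeping of the multiplicities and the equal-parity condition right so that no factor of $N^{o(1)}$ survives into the final estimate is the delicate part of the argument.
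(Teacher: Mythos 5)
You have correctly identified one of the two pillars of the argument, namely Lemma \ref{growth}: since the squares contain no four-term arithmetic progression (Lemma \ref{Fermat}), a Hilbert cube inside $S_2$ has exponentially many distinct elements, $|H|\geq 2(4/3)^{d-1}-1$. But the way you feed this into a size bound --- comparing against $|S_2\cap[1,N]|\leq N^{1/2}$ --- only yields $d=O(\log N)$, as you note, and the ``descent'' you then sketch to bridge the gap from $\log N$ to $\log\log N$ is not an argument. In particular, the claim that after one round the generators of a surviving subcube ``can be taken to lie in an interval of length $(\log N)^{O(1)}$'' has no justification and is not something the structure of the problem provides: the $a_i$ are arbitrary positive integers up to $N$, and nothing in the difference-of-squares or divisor-function analysis localizes them. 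You flag this yourself as the delicate unresolved step; it is in fact the entire content that is missing. The opening divisor-counting computation ($2^{d-1}\leq\tau(a_d)$) is also not used in the paper and does not contribute to the final bound.

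The ingredient you are missing is Gyarmati's theorem (Lemma \ref{kati}): if $C,D\subset\{1,\ldots,N\}$ and $C+D\subset S_2$, then $\min\{|C|,|D|\}\leq 8\log N$. This is a genuinely stronger ``thinness'' statement than the trivial $|S_2\cap[1,N]|\leq N^{1/2}$, and it is what produces the extra logarithm in one stroke, with no iteration. The proof then splits the cube into two halves, $C=H(a_0;a_1,\ldots,a_{\lfloor d/2\rfloor})$ and $D=H(0;a_{\lceil(d+1)/2\rceil},\ldots,a_d)$, so that $C+D=H(a_0;a_1,\ldots,a_d)\subset S_2\cap[1,N]$. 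Each half is a subset (or a shift of a subset) of the squares, hence free of four-term progressions by Lemma \ref{Fermat}, so Lemma \ref{growth} with $k=4$, $c=4/3$ gives $|C|,|D|\geq c^{\lfloor d/2\rfloor}-1$. Combining with Lemma \ref{kati} yields $c^{\lfloor d/2\rfloor}-1\leq 8\log N$, i.e.\ $d\leq\frac{2}{\log(4/3)}\log\log N+O(1)\leq 7\log\log N$. Without Lemma \ref{kati} (or a substitute of comparable strength) your proposal cannot reach the stated bound.
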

A comparable bound was proved in the authors' earlier paper
(\cite{DietmannandElsholtz:2012}, Theorem 1)
for $k$-th powers ($k\geq 3$) instead of squares. The treatment of higher powers
was easier for the following reason:
By a deep theorem of Darmon and Merel \cite{DarmonandMerel:1997},
following the proof of Fermat's last
theorem, for $k \ge 3$ there are no 3-progressions in the set of $k$-th powers.
On the contrary, the set of squares does contain arithmetic 3-progressions.
We should remark that
the special case $a_0=0$ of subset sums was previously  
studied by Csikv\'{a}ri (\cite{Csikvari:2008}, Corollary 2.5), who proved
in this case the same bound $d=O( \log \log N)$. His method of proof 
would not extend to the general case of $a_0 \neq 0$. 

\begin{remark}

With regard to our earlier result,
Noga Alon kindly pointed out to us that
Lemma 5 of \cite{DietmannandElsholtz:2012},
which corresponds to Lemma \ref{progressionfreelemma} in this paper, 
is actually a version of a result of Erd\H{o}s and Rado 
\cite{ErdosandRado:1960} on $\Delta$-systems (or sunflowers).
On this subject small quantitative
improvements are due to Kostochka \cite{Kostochka:1996}, 
even though the explicit dependence on the parameters
$h$ and $v$ is not well understood, and apparently at least one of the
parameters $h$ and $v$ is fixed.
For the set $S_2$ of squares, this would, assuming uniformity,
possibly lead to the tiny improvement
$d=O((\log_2 N)^2\frac{\log_5 N}{\log_4 N})$, 
the $\log_i N$ denoting the $i$-fold iterated logarithm.
Moreover, the Erd\H{o}s-Rado
conjecture on these $\Delta$-systems,
for which Erd\H{o}s \cite{Erdos:1981} offered a prize of $\$$1000,
would have implied $d=O(\log \log N)$.
Fortunately, it was possible to bypass the realm
of $\Delta$-systems and to prove for Hilbert cubes in the set of squares
the upper bound $d=O(\log \log N)$ unconditionally.
\end{remark}
\begin{corollary}[Hilbert cubes in quadratic polynomials]{
\label{cor:quadraticpoly}}
Let $f(x)=ax^2+bx+c$ be a quadratic polynomial where $a,b,c \in \Z$
such that $a>0$ and $4a+b^2-4ac \ge 1$,
and let $S=\{f(x) : x \in \Z\}$.
Let $a_0$ be a non-negative integer and $A=\{a_1, \ldots, a_d\}$ be a
set of positive integers such that $H(a_0; a_1, \ldots, a_d) \subset
S \cap [1,N]$. Then for sufficiently large $N$, we have
\[
  d \le 7 \log \log N.
\]
\end{corollary}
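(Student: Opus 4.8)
The plan is to deduce Corollary \ref{cor:quadraticpoly} from Theorem \ref{thm:squares} by completing the square. Since $4a f(x) + (b^2 - 4ac) = (2ax+b)^2$, the affine map $\phi(t) = 4at + (b^2 - 4ac)$ sends $S = \{f(x) : x \in \Z\}$ into the set $S_2$ of squares. As $\phi$ has nonzero slope, it also transports Hilbert cubes to Hilbert cubes: for $a_0 \neq 0$,
\[
  \phi\bigl(H(a_0; a_1, \ldots, a_d)\bigr) = H\bigl(4a a_0 + b^2 - 4ac;\ 4a a_1, \ldots, 4a a_d\bigr),
\]
a cube of the same dimension $d$. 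One then checks that this image cube satisfies the hypotheses of Theorem \ref{thm:squares}: the new difference parameters $4a a_i$ are positive since $a > 0$ and $a_i > 0$; the new base point is a non-negative integer, being at least $4a a_0 + b^2 - 4ac \geq 4a + b^2 - 4ac \geq 1$ by the assumption on $f$ together with $a_0 \geq 1$; and each element of the image is $(2ax+b)^2$ for some $x$ with $1 \leq f(x) \leq N$, hence lies in $S_2 \cap [1, N']$ where $N' := 4aN + b^2 - 4ac$. Theorem \ref{thm:squares} then bounds $d$.

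In the homogeneous case $a_0 = 0$ the map $\phi$ no longer produces a homogeneous cube, because the translation constant $b^2 - 4ac$ is in general nonzero and need not be a square, so Theorem \ref{thm:squares} cannot be applied to $\phi(H(0; a_1, \ldots, a_d))$ directly. Instead I would first invoke the elementary observation from the introduction that $H(0; a_1, \ldots, a_d) \subseteq S \cap [1,N]$ forces $H(a_1; a_2, \ldots, a_d) \subseteq S \cap [1,N]$, a Hilbert cube of dimension $d-1$ with nonzero base point $a_1 \geq 1$, and then run the substitution above on that cube, obtaining $d - 1 \le 7\log\log N'$.

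It remains to see that these bounds in terms of $N' = 4aN + b^2 - 4ac$ imply the clean bound $d \le 7\log\log N$ for $N$ large. Since $a,b,c$ are fixed, $N' = 4aN(1+o(1))$ and hence $\log\log N' = \log\log N + o(1)$, so the loss from replacing $N$ by $N'$, and the loss of one dimension in the homogeneous case, are both absorbed provided Theorem \ref{thm:squares} leaves a little room below the constant $7$ — which it does, or which one can make transparent by instead rerunning the proof of Theorem \ref{thm:squares} with the squares replaced throughout by $S$: that argument goes through unchanged because a difference of two $f$-values factors as $f(x') - f(x) = (x'-x)\bigl(a(x'+x)+b\bigr)$, exactly parallel to the factorization $x'^2 - x^2 = (x'-x)(x'+x)$ on which the proof of Theorem \ref{thm:squares} rests. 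The only real work, then, is this bookkeeping of constants and the separate (and easy) treatment of $a_0 = 0$; all of the substance is already contained in Theorem \ref{thm:squares}.
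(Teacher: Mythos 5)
Your argument is correct and is essentially the paper's own proof: the same affine substitution $t \mapsto 4at + b^2 - 4ac$ carries the cube into $H(4aa_0+b^2-4ac; 4aa_1,\ldots,4aa_d) \subset S_2 \cap [1, 4aN+b^2-4ac]$, and, exactly as you say, one must quote the constant $6.96$ from the \emph{proof} of Theorem \ref{thm:squares} so that the passage from $N$ to $4aN+b^2-4ac$ (and the loss of one dimension) is absorbed into $7\log\log N$. Your separate reduction of the case $a_0=0$ to the cube $H(a_1;a_2,\ldots,a_d)$ is a small point of extra care that the paper's two-line proof glosses over, and you handle it correctly; only your closing aside about rerunning the squares argument verbatim for $S$ is loose (it would need analogues of Lemma \ref{kati} and Lemma \ref{Fermat} for $S$), but it is not load-bearing.
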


Note that the assumption $4a+b^2-4ac \ge 1$ is not really necessary,
but simplifies the proof as it avoids possibly hitting the number zero
which is excluded in our definition of the set $S_2$ of squares.\\
The method of proof allows us also to prove the following theorem, which
improves \cite[Theorem 2]{DietmannandElsholtz:2012}. The
special case of subset sums has
been studied by Schoen \cite[Corollary 2.2]{Schoen:2011}.

\begin{theorem}[Hilbert cubes in progression-free sets]
{\label{thm:kprogression}}
Let $k\geq 3$ be a positive integer, and
let $S$ denote a set of integers without an arithmetic progression of 
length $k$. Moreover, let
\[
  c = \frac{k}{k-1}.
\]
Then, for sufficiently large $N$, the following holds true: If 
$a_0$ is a non-negative integer and
$A=\{a_1, \ldots, a_d\}$ is a set of positive integers
such that 
$H(a_0; a_1, \ldots , a_d)$ $\subset S\cap [1,N]$, then
\begin{align*}
   d & \leq \frac{2(k-2)}{(k-1)\log c}\log N+\frac{2}{\log c}+1.
\end{align*}
\end{theorem}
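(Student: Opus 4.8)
The plan is to combine the ``sumset growth'' tool (Lemma \ref{growth}) with the hypothesis that $S$ contains no $k$-term arithmetic progression. First I would exploit a purely additive feature of Hilbert cubes: if $H(a_0;a_1,\ldots,a_d)\subset S$, then for any single index $i$ the set $S$ contains both $H(a_0;a_1,\ldots,\widehat{a_i},\ldots,a_d)$ and its translate by $a_i$, and more generally, for each $t\le d$, the full cube contains the sumset of a $(d-t)$-dimensional subcube with the $2^t$ subset sums of $\{a_{d-t+1},\ldots,a_d\}$. The key point is that a Hilbert cube of dimension $d$ has, by Lemma \ref{growth}, at least roughly $c'^{\,d}$ distinct elements for a suitable $c'>1$; equivalently, and this is the form I expect to use, a $d$-dimensional Hilbert cube contained in $[1,N]$ must have $d$ bounded in terms of $N$ once one knows $S$ is sparse in the appropriate sense.

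The engine is the no-$k$-AP condition. The standard observation (going back to Hegyv\'ari--S\'ark\"ozy and used by Schoen in the subset-sum case) is that inside a Hilbert cube one can \emph{build} long arithmetic progressions from repeated coordinates, or, when the $a_i$ are distinct, from sums $\varepsilon_i a_i$ that are forced to collide by a pigeonhole count once $d$ is large. Concretely, I would first reduce to the case that the $a_i$ are distinct (if some value repeats $k-1$ times among the $a_i$, then $a_0, a_0+a_i, a_0+2a_i,\ldots$ gives a $k$-AP in $S$ immediately, contradiction, so each value occurs at most $k-2$ times and we lose only a factor $k-2$ in $d$). With distinct $a_i$, order them and consider the $d$ partial sums $a_0+a_1+\cdots+a_j$ together with suitable ``increments''; a telescoping / averaging argument produces, inside $H\subset S\cap[1,N]$, an arithmetic progression whose length grows with $d$, and demanding that this length stay below $k$ caps $d$. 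Interleaving this with the multiplicative-size lower bound from Lemma \ref{growth}: the cube lies in $[1,N]$, so its largest element is $\le N$, while the element $a_0+a_1+\cdots+a_d$ is at least the sum of the distinct positive $a_i$, giving one inequality; combining with a dyadic decomposition of the $a_i$ by size and applying the no-$k$-AP bound on each dyadic block yields $d\le \frac{2(k-2)}{(k-1)\log c}\log N + O(1)$, with the constant $\log c=\log\frac{k}{k-1}$ entering precisely because $c=\frac{k}{k-1}$ is the geometric ratio one can guarantee in the sumset-growth step for a no-$k$-AP set.

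The main obstacle, and the place where care is needed, is the \emph{non-distinctness} and the passage from ``a Hilbert cube'' to ``a long arithmetic progression'': unlike the subset-sum case treated by Schoen, here $a_0\ne 0$ is allowed and the $a_i$ may repeat, so one cannot directly quote the subset-sum growth lemma. I expect the bulk of the work to be in showing that Lemma \ref{growth} still applies after the reduction, and in tracking the constants through the dyadic decomposition so that the coefficient of $\log N$ comes out as exactly $\frac{2(k-2)}{(k-1)\log c}$ rather than something weaker; the additive constants $\frac{2}{\log c}+1$ are then just the accumulated rounding errors from the reduction to distinct $a_i$ (the $+1$) and from the dyadic counting (the $\frac{2}{\log c}$). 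The $k$-AP hypothesis is used only through the elementary fact that a no-$k$-AP set forces the geometric ratio $c$ in the growth step, so no deep input beyond Lemma \ref{growth} should be required.
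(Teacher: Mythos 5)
There is a genuine gap: you never supply, or even name, the second essential ingredient of the proof, namely an upper bound on the pieces of the cube coming from the $k$-AP-free hypothesis on the \emph{sumset}. The paper's argument splits the cube as $C+D$ with $C=H(a_0;a_1,\ldots,a_{\lfloor d/2\rfloor})$ and $D=H(0;a_{\lceil (d+1)/2\rceil},\ldots,a_d)$ (you do gesture at exactly this decomposition), lower-bounds $\min(|C|,|D|)\ge c^{\lfloor d/2\rfloor}-1$ by applying Lemma \ref{growth} to each half separately (each half being a translate of a subset of $S$, hence $k$-AP-free), and then upper-bounds $\min(|C|,|D|)\le \sqrt{6}\,N^{(k-2)/(k-1)}$ by the Croot--Ruzsa--Schoen theorem (Lemma \ref{lem:croot}). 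The exponent $\frac{k-2}{k-1}$ there is the sole source of the coefficient $\frac{2(k-2)}{(k-1)\log c}$; it has nothing to do with multiplicities of the $a_i$. In place of this you offer two mechanisms that are not arguments: a ``telescoping / averaging'' claim that partial sums force an arithmetic progression of length growing with $d$ (taken literally this would give $d=O_k(1)$, contradicting the Szekeres-type example in the paper's own remark with $d\sim\frac{k-2}{\log k}\log N$), and an unexplained ``dyadic decomposition of the $a_i$ by size'' for which no reason is given that a single dyadic block can contain only $O_k(1)$ generators.

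Two further problems. The reduction to distinct $a_i$ is unnecessary --- Lemma \ref{growth} is stated and proved for arbitrary non-zero $a_i$, repeated or not --- and your bookkeeping ``lose only a factor $k-2$'' cannot produce the stated constant: it would require a distinct-generator bound of roughly $\frac{2}{(k-1)\log c}\log N\approx 2\log N$ uniformly in $k$, which is false for large $k$ (take the distinct generators $jk^i$ with $1\le j\le m$ and $\frac{m(m+1)}{2}\le k-2$ inside the digit-restricted set; this gives dimension about $\frac{\sqrt{2k}}{\log k}\log N$). Finally, note that you already hold a one-line proof of the stated inequality without writing it down: Lemma \ref{growth} applied to the full cube gives $2c^{d-1}-1\le |H|\le N$, hence $d\le \frac{\log N}{\log c}+1$, and since $\frac{2(k-2)}{k-1}\ge 1$ for $k\ge 3$ this is already at most the right-hand side of Theorem \ref{thm:kprogression}. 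Either make that trivial deduction explicit, or carry out the $C+D$ argument with the Croot--Ruzsa--Schoen input; as written, the proposal does neither.
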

\begin{remark}
A series expansion of $\log(1+\frac{1}{k-1})$ shows that
\[\frac{2(k-2)}{(k-1)\log c} 
=(2k-3) -\frac{7}{6k}-\frac{11}{12k^2} +O(\frac{1}{k^3}) .\]
\end{remark}

\begin{remark}
The upper bound in terms of $N$ is close to the correct order of magnitude, 
as can be seen from the following example, 
which is a variant of an example by Szekeres 
mentioned by Erd\H{o}s and Tur\'{a}n \cite{ErdosandTuran:1936}.

Let $k$ be a fixed prime.
In base $k$, we study the set of integers avoiding the digit $k-1$.
Let
\[
  H=H(0; a_1, \ldots , a_1, \ldots, a_s, \ldots, a_s),
\]
with
\[
  a_{i} =k^{i-1}: i=1, \ldots, s= \left \lfloor \frac{\log N}{\log k}
  \right \rfloor,
\]
where all elements occur $(k-2)$-fold.
Then all $n\in H$ can be written 
as $n= \sum_{i=0}^{s-1} b_i k^i$, with $b_i \in [0,k-2]$.
This set does not contain any arithmetic progression of length $k$ and
gives $d\sim \frac{k-2}{\log k} \log N$.

\end{remark}

The following lemma
proved to be the key idea and may be of independent interest;
some variant of it is 
implicitly contained in Schoen (\cite{Schoen:2011}, Lemma 2.1).

\begin{lemma}[A cube lemma for progression-free sets,
exponential growth]\label{growth}
Let $k\geq 3$ be a positive integer, and
let $S$ denote a set of integers without an arithmetic progression of 
length $k$. Moreover, let $a_0$ be an integer, let
$A=\{a_1, \ldots, a_d\}$ be a set of non-zero integers,
and let $H(a_0; a_1, \ldots, a_d) \subset S$. 
Then
\[
  |H|\geq 2 \left( \frac{k}{k-1} \right)^{d-1}-1.
\]
\end{lemma}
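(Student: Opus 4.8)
The plan is to prove the lower bound on $|H|$ by an inductive argument on the dimension $d$, exploiting the fact that adding one more base element $a_{d}$ can at most double the cube, but cannot \emph{exactly} double it too often, because a near-doubling would force a long arithmetic progression inside $S$. Concretely, write $H_j = H(a_0; a_1,\dots,a_j)$ for $0\le j\le d$, so $H_0=\{a_0\}$ and $H_d=H$, and set $h_j=|H_j|$. Since $H_j = H_{j-1}\cup(H_{j-1}+a_j)$, we always have $h_j\le 2h_{j-1}$; the point is to control how close to equality we can get. I would track the quantity $h_j - (\text{number of collisions at step }j)$, where a collision means an element $x\in H_{j-1}$ with $x+a_j\in H_{j-1}$ as well.

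The key step is the following structural observation: if at step $j$ the set $H_{j-1}$ and its translate $H_{j-1}+a_j$ overlap in many points, then $H_{j-1}$ contains a long chain $x, x+a_j, x+2a_j, \dots$, i.e.\ a long arithmetic progression with common difference $a_j$, which since $H_{j-1}\subseteq H\subseteq S$ contradicts $S$ being free of $k$-term progressions. More precisely, partition $H_{j-1}$ into residue classes modulo $a_j$; within each class the elements form a union of ``runs'' (maximal arithmetic progressions of difference $a_j$), and a collision $x\mapsto x+a_j$ happens exactly when $x$ is not the top of its run. Since every run has length at most $k-1$, in a run of length $\ell$ the number of non-top elements is $\ell-1\le \frac{k-2}{k-1}\ell$. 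Summing over all runs shows the number of collisions at step $j$ is at most $\frac{k-2}{k-1}h_{j-1}$, hence
\[
  h_j \;\ge\; 2h_{j-1} - \frac{k-2}{k-1}h_{j-1} \;=\; \frac{k}{k-1}\,h_{j-1}.
\]
Iterating from $j=1$ to $j=d$ gives $h_d\ge \left(\frac{k}{k-1}\right)^{d}h_0$. Since $h_0=1$ this would already give $\left(\frac{k}{k-1}\right)^{d}$, which is a touch stronger than claimed; to land exactly on $2\left(\frac{k}{k-1}\right)^{d-1}-1$ one should be slightly more careful at the first step (where $H_0$ is a single point and the translate $H_0+a_1$ is genuinely disjoint from it, so $h_1=2$), and then note that the ``$-1$'' absorbs the loss from measuring collisions by a fractional bound rather than an integer one; I would reconcile the constants by writing $h_j+1 \ge \frac{k}{k-1}(h_{j-1}+1)$ or a similar shifted recursion and checking the base case $h_1=2$.

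The main obstacle I anticipate is the run-counting bookkeeping: one must be careful that ``collisions'' are counted in the right direction (elements of $H_{j-1}$ whose image under $+a_j$ lands back in $H_{j-1}$), that the progression-freeness is applied to $H_{j-1}$ rather than to the larger $H_j$ (which is fine since $H_{j-1}\subseteq H\subseteq S$), and that the elements $a_i$ being possibly equal does not break the argument (it does not: if $a_i=a_j$ then the step $j$ simply contributes little growth, consistent with the bound, since $H_{j}=H_{j-1}+\{0,a_j\}$ still satisfies the run bound). A secondary point to get right is that $H$ is finite and nonempty so all $h_j$ are well-defined positive integers, and that the inequality is vacuous or trivial for $d=0,1$, which serves as the induction base. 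Apart from these care points the argument is elementary.
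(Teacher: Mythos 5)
Your argument is essentially the paper's proof read in the contrapositive direction: the identity $|H_j|=2|H_{j-1}|-|(H_{j-1}+a_j)\cap H_{j-1}|$ combined with the decomposition of $H_{j-1}$ into maximal runs of difference $a_j$ is exactly the content of the paper's auxiliary Lemma \ref{dense-progression} (a large overlap forces a progression of length $k$ and difference $a_j$ inside $H_{j-1}\subseteq S$), and your recursion $h_j\ge\frac{k}{k-1}h_{j-1}$ anchored at $h_1=2$ yields $|H|\ge 2\left(\frac{k}{k-1}\right)^{d-1}$, which implies the stated bound. Two small corrections to your commentary: first, $\left(\frac{k}{k-1}\right)^{d}$ is actually \emph{weaker} than $2\left(\frac{k}{k-1}\right)^{d-1}-1$ once $d$ is large, since $\left(2-\frac{k}{k-1}\right)\left(\frac{k}{k-1}\right)^{d-1}$ eventually exceeds $1$, so starting the iteration at $h_1=2$ rather than $h_0=1$ is necessary, not optional polish; second, the ``$-1$'' in the statement is not there to absorb fractional-counting slack but to handle the case $a_0=0$, where the paper's convention excludes the empty sum from $H(0;a_1,\ldots,a_d)$, so one runs the argument on the full cube (which contains $0$ and hence need not lie entirely in $S$) and deducts one at the end --- a case your write-up, like the paper's own one-line remark, leaves implicit.
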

It is well known that for sets
without progressions of length $k=3$ one even has that $|H|=2^d$ for
$a_0 \ne 0$.
This is an exercise in Solymosi \cite{Solymosi:2007}, see also Lemma 3
in \cite{DietmannandElsholtz:2012}.

\subsection{Applications to pure powers and powerful numbers}
As an immediate application one can also improve upon a
result of
Gyarmati, S\'{a}rk\"{o}zy, and Stewart (\cite[Theorem 6]{GSS:2003}), where they
studied the maximal dimension $d$ of subset sums in the set of powerful numbers
\[W=\{n \in \N: p\mid n \Rightarrow p^2\mid n\}.\]
They proved an upper bound of $d=O((\log N)^3 (\log \log N)^{-1/2})$.
Note that they study subset sums only,
but their method of proof also seems
to give the same result for Hilbert cubes.
For this problem they also prove a lower bound of
$d \gg (\log N)^{1/2}$ for the maximal dimension
which shows that this problem is indeed quite different from
the case of Hilbert cubes in the set of squares studied above.

Observing that the powerful numbers cannot contain very long arithmetic
progressions, and making use of the exponential growth
guaranteed by Lemma \ref{growth}, we can establish the following result.

\begin{theorem}[Hilbert cubes in powerful numbers]{\label{thm:Hilbert-powerful}}
Let $W$ denote the set of powerful numbers.
Further, let $a_0$ be a non-negative integer, and let
$A=\{a_1, \ldots, a_d\}$
be a set of positive integers such that $H(a_0; a_1, \ldots, a_d)
\subset W \cap [1,N]$. Then
\[
  d \leq 5(\log N)^2
\]
for sufficiently large $N$.
\end{theorem}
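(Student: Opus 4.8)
The plan is to combine Lemma~\ref{growth} (exponential growth of $|H|$ for progression-free sets) with the two basic facts that powerful numbers are sparse and that powerful numbers cannot carry long arithmetic progressions. First I would record that the number of powerful numbers up to $N$ is $O(N^{1/2})$, so that $|H|=O(N^{1/2})$; this will be the upper bound against which the growth estimate is played off. Next I would establish that a set of powerful numbers inside $[1,N]$ contains no arithmetic progression of length $k$ once $k$ is slightly larger than a suitable power of $\log N$. Indeed, if $m, m+t, \dots, m+(k-1)t$ were all powerful and all at most $N$, then looking at this progression modulo a small prime $p$ one sees that roughly a $1/p$ proportion of its terms are divisible by $p$ but not $p^2$ unless $p \mid t$; quantitatively one shows that a progression of powerful numbers of length $k$ forces $t$ (and hence $N \ge (k-1)t$) to be divisible by a product of small primes, giving $k \le (\log N)^{c}$ for an explicit constant. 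I would aim to get a clean bound of the shape: any arithmetic progression of powerful numbers in $[1,N]$ has length at most $k_0$ with $k_0 \le (\log N)^{2}$ (or a slightly smaller power), with all constants explicit enough to survive to the final inequality.

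With such a $k_0$ in hand, $S = W \cap [1,N]$ contains no arithmetic progression of length $k = k_0+1$, so Lemma~\ref{growth} applies with this $k$ and gives
\[
  2\left(\frac{k}{k-1}\right)^{d-1} - 1 \le |H| \le C N^{1/2}.
\]
Taking logarithms yields
\[
  (d-1)\log\!\left(1+\frac{1}{k-1}\right) \le \tfrac{1}{2}\log N + O(1),
\]
and since $\log(1+\tfrac{1}{k-1}) \ge \tfrac{1}{2(k-1)}$ for $k \ge 2$ (or more crudely $\gg 1/k_0$), we obtain $d \le (1+o(1))\,k_0 \log N \le 5(\log N)^2$ for $N$ large, once $k_0 \le (\log N)^2$ with a bit of room to spare. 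The non-negativity of $a_0$ and the restriction that the $a_i$ be positive integers are only needed to invoke Lemma~\ref{growth} and to ensure $H \subset [1,N]$, exactly as in Theorems~\ref{thm:squares} and~\ref{thm:kprogression}.

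The main obstacle is the arithmetic progression bound for powerful numbers, i.e.\ obtaining $k_0 \le (\log N)^2$ (or whatever power is needed so that the final constant is $5$). The elementary sieve-type argument sketched above — counting, for each small prime $p$, how many terms of the progression can avoid being "exactly divisible by $p$" — should do it, but one must be careful about primes dividing the common difference $t$ and about getting the dependence on $N$, not just on the length, right; the cleanest route is probably to show directly that if $m,\dots,m+(k-1)t \le N$ are all powerful then $\prod_{p \le z} p \mid t$ for a suitable $z = z(k)$, whence $N \ge t \ge e^{(1+o(1))z}$ and $z$ is correspondingly small, which inverts to the desired bound on $k$. Everything after that is the short two-line computation above.
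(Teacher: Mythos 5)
Your proposal is correct in substance but takes a genuinely different route in its second half. The first step — bounding the length of an arithmetic progression of powerful numbers in $[1,N]$ by forcing every prime $p\le k/2$ to divide the common difference, whence $t\ge\prod_{p\le k/2}p=e^{(1/2+o(1))k}$ and $k\le(2+o(1))\log N$ — is exactly the paper's argument. But where the paper then feeds this value of $k$ into Theorem~\ref{thm:kprogression} (whose proof splits the cube into two halves $C$ and $D$ and invokes the Croot--Ruzsa--Schoen bound $\min(|C|,|D|)\le\sqrt6\,N^{1-1/(k-1)}$ of Lemma~\ref{lem:croot}), you apply Lemma~\ref{growth} directly to the whole cube and play the exponential lower bound $|H|\ge 2(k/(k-1))^{d-1}-1$ against the elementary count $|W\cap[1,N]|\ll N^{1/2}$. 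Your version is more self-contained — it dispenses with the Croot--Ruzsa--Schoen input entirely, using only the sparsity of $W$ — and it even yields a slightly better constant: $(d-1)\log(1+\tfrac1{k-1})\le\tfrac12\log N+O(1)$ gives $d\le(\tfrac12+o(1))k\log N=(1+o(1))(\log N)^2$, versus the paper's $(4+o(1))(\log N)^2$. (This shortcut is available here precisely because $W$ is sparse; the paper's detour through Theorem~\ref{thm:kprogression} is the version that works for an arbitrary progression-free set.)

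One bookkeeping slip to fix: you write that the conclusion $d\le(1+o(1))\,k_0\log N\le 5(\log N)^2$ holds ``once $k_0\le(\log N)^2$,'' but that hypothesis is not sufficient — it would only give $d\ll(\log N)^3$. What you need, and what your sieve argument actually delivers, is $k_0=O(\log N)$ (indeed $k_0\le(2+o(1))\log N$). With that corrected target for the progression-length bound, the two-line computation at the end closes as claimed.
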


Finally, we study Hilbert cubes and subset sums in the set 
\[V=\{a^n: a,n \in \N, n \geq 2\}\] 
of pure powers.
The set of pure powers behaves
much more irregularly than the set of powers of a fixed base $a$,
or the set of $n$-th powers for fixed exponent
$n$. There are some interesting 
recent results on the set of pure powers,
see for example \cite{GyoriHajduPinter:2009}.

As $V\subset W$, we obtain as a corollary to Theorem 
\ref{thm:Hilbert-powerful} the following result.
\begin{corollary}[Hilbert cubes in pure powers]{\label{cor:Hilbert-pure}}
Let $V$ denote the set of pure powers. Further,
let $a_0$ be a non-negative integer, and let $A=\{a_1, \ldots, a_d\}$
be a set of positive integers such that $H(a_0; a_1, \ldots, a_d)
\subset V \cap [1,N]$. Then
\[
  d \leq 5(\log N)^2
\]
for sufficiently large $N$.
\end{corollary}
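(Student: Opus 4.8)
The plan is to obtain this as an immediate consequence of Theorem~\ref{thm:Hilbert-powerful}, using nothing more than the set-theoretic inclusion $V\subseteq W$. First I would record why that inclusion holds: if $m=a^n$ with $a,n\in\N$ and $n\geq 2$, then every prime $p$ dividing $m$ divides $a$, hence $p^2\mid a^2\mid a^n=m$, so $m$ is powerful. Therefore $V\cap[1,N]\subseteq W\cap[1,N]$.

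Now suppose $a_0$ is a non-negative integer, $A=\{a_1,\ldots,a_d\}$ is a set of positive integers, and $H(a_0;a_1,\ldots,a_d)\subset V\cap[1,N]$. By the inclusion just noted, $H(a_0;a_1,\ldots,a_d)\subset W\cap[1,N]$ as well, so the hypotheses of Theorem~\ref{thm:Hilbert-powerful} are met verbatim, and that theorem gives $d\leq 5(\log N)^2$ for $N$ sufficiently large, which is the assertion of Corollary~\ref{cor:Hilbert-pure}.

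The only point that needs checking is that Theorem~\ref{thm:Hilbert-powerful} does not impose a hypothesis that the Corollary fails to supply; but both statements require exactly that $a_0$ be a non-negative integer and that $\{a_1,\ldots,a_d\}$ be a set of positive integers, so there is nothing further to verify. I do not anticipate any obstacle in this deduction: all the substance — combining the exponential sumset growth furnished by Lemma~\ref{growth} with the fact that powerful numbers, being a sparse set, cannot contain arithmetic progressions that are too long — is already carried out in the proof of Theorem~\ref{thm:Hilbert-powerful}. The stronger bounds for the homogeneous case $a_0=0$ advertised in the abstract lie deeper and would instead require the separate local analysis modulo primes rather than this trivial inclusion, so they are not in scope here.
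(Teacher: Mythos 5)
Your proposal is correct and is exactly the paper's argument: the Corollary is deduced from Theorem~\ref{thm:Hilbert-powerful} via the inclusion $V\subseteq W$, which you justify properly. Nothing further is needed.
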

The last two results are an application of Theorem
\ref{thm:kprogression} with $k=O(\log N)$. Restricting to subset sums,
we achieve a considerable improvement on this bound:
indeed we show that the relevant homogeneous progressions
are of the much shorter size $O(\frac{\log \log N}{\log \log \log N})$.
Using this, we are able to adapt the
method that we presented in \cite{DietmannandElsholtz:2012} to this situation.
It might seem that the method based on Lemma \ref{growth} is more
powerful, as the exponential growth is better than the growth 
established in step 3 of the proof of Theorem 4
in \cite{DietmannandElsholtz:2012}, see
formula \eqref{ineq1} below.
However, the previous method seems to be more flexible
in situations where only strong
bounds on the length of homogeneous
progressions rather than all progressions are available.
This way we obtain the following result.

\begin{theorem}[Subset sums in pure powers]
\label{brunnen}
Let $V$ denote the set of pure powers.
Further, let $A=\{a_1, \ldots, a_d\}$
be a set of positive integers such that $H(0; a_1, \ldots, a_d)
\subset V \cap [1,N]$. If the $a_i$ are pairwise distinct, then
\[
  d \ll \frac{(\log \log N)^3}{\log \log \log N}
\]
for sufficiently large $N$; in the general case, i.e. where the $a_i$ are not
necessarily distinct,
\[
  d \ll \frac{(\log \log N)^4}{(\log \log \log N)^2}
\]
holds true.
\end{theorem}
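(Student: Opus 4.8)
The plan is to combine a structural reduction with local obstructions modulo well‑chosen primes, following the route of Gyarmati, S\'ark\"ozy and Stewart \cite{GSS:2003} but replacing their bound on the length of arithmetic progressions in $V$ by the much sharper bound on the length of \emph{homogeneous} progressions. First I would show that if $H(0;a_1,\dots,a_d)\subset V\cap[1,N]$ then $V$ contains a homogeneous arithmetic progression of length roughly $\log\log N/\log\log\log N$. This is the key arithmetic input: a pure power $a^n\le N$ with $n\ge 2$ forces $n\le \log_2 N$, so among any long progression of pure powers many share the same exponent (or a common divisor of the exponents), and then a superelliptic/Fermat‑type obstruction — concretely, the statement that $x^m, y^m, z^m$ cannot be in arithmetic progression for $m\ge 3$, together with a counting argument on how often the square exponent can occur — caps the length. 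The precise shape $O(\log\log N/\log\log\log N)$ comes from balancing the number of available exponents $n\le\log_2 N$ against the constraint that for each fixed exponent the progression can have only boundedly many terms.

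Second, with this bound in hand, I would run the iterative method of \cite[proof of Theorem 4]{DietmannandElsholtz:2012}: start from the cube $H(0;a_1,\dots,a_d)$, pass to a large subcube, and at each stage use a prime $p$ (whose existence is guaranteed by Linnik's theorem on the least prime in an arithmetic progression, plus a sieve to control the number of prime factors) to impose a congruence condition that is incompatible with membership in $V$ unless the cube degenerates. Each such step either halves (roughly) the dimension or exhibits a forbidden configuration; iterating $O(\log d)$ times and feeding in the progression bound from step one yields $d\ll (\log\log N)^3/\log\log\log N$. Here the gain of one logarithm over \cite{DietmannandElsholtz:2012} is exactly the gain from using homogeneous progressions, which are shorter by essentially a full logarithmic factor than general progressions in $V$.

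Third, for the general case where the $a_i$ need not be distinct, I would group equal values: if some value $a$ occurs $t$ times among $a_1,\dots,a_d$, then $H$ contains $\{0,a,2a,\dots,ta\}$ shifted inside various subset sums, so $t$ itself is bounded by (a constant times) the distinct‑case dimension bound applied suitably, and the number of distinct values is bounded by the distinct‑case bound; multiplying the two bounds gives $d\ll (\log\log N)^4/(\log\log\log N)^2$. The bookkeeping here is to make sure the two applications are genuinely independent, i.e. that collapsing multiplicities does not destroy the homogeneous‑cube structure needed to invoke step one.

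The main obstacle I expect is step one: proving that homogeneous arithmetic progressions in $V$ have length only $O(\log\log N/\log\log\log N)$, rather than the weaker $O(\log N/\log\log N)$‑type bound that a naive argument gives. This requires exploiting that \emph{all} terms of the progression are subset sums of a common set $A$ — so they are not arbitrary pure powers but are highly constrained additively — and balancing this against the multiplicative rigidity of $V$ (few admissible exponents, superelliptic obstructions for exponent $\ge 3$, and careful treatment of the exponent‑$2$ case via the results already established for squares, e.g. Theorem \ref{thm:squares}). Getting the exponent of $\log\log N$ down to $3$ (and not $4$) in the distinct case hinges on squeezing this step to its sharpest form.
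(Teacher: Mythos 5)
Your high-level architecture matches the paper's: bound the length of \emph{homogeneous} arithmetic progressions in $V$, feed that into the representation-function bound $g(h,N)\le h!(v-1)^{h-1}$ (Lemma \ref{progressionfreelemma}), combine with a Gallagher/Linnik/character-sum bound for five sets pairwise summing into $V$ (Proposition \ref{prop:tripleset}), and assemble via the $h$-fold disjoint sumset splitting of Lemma \ref{vancouver} with $h=\log\log N$; the multiplicity reduction in your third step is also exactly the paper's. However, there is a genuine gap at precisely the point you flag as the main obstacle, and the mechanism you propose there does not close it. Balancing ``$O(\log N)$ available exponents $n\le\log_2 N$'' against ``boundedly many terms per exponent'' (via Darmon--Merel or Fermat's four squares) can only ever give $\ell\ll\log N$, since nothing in that argument restricts which of the $\asymp\log N$ prime exponents may occur. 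The missing idea is a $2$-adic localisation: if $x,2x,\dots,\ell x\in V$ then $x$ cannot be odd (else $2x\equiv 2\bmod 4$ is not a pure power), so $2^\alpha\,\|\,x$ with $\alpha\ge 1$; for every \emph{odd} $n\le\ell$ one has $2^\alpha\,\|\,nx$, forcing $nx$ to be a $q$-th power for some prime $q\mid\alpha$, and since $\alpha\ll\log N$ there are only $\omega(\alpha)\ll\log\log N/\log\log\log N$ admissible $q$. Coupled with the elementary observation that two distinct squarefree $n_1,n_2$ cannot both make $n_ix$ a $q$-th power for the \emph{same} $q$ (because $\sqrt[q]{n_1/n_2}$ would be a rational $q$-th root of a non-$q$-th-power), and the fact that there are $\gg\ell$ odd squarefree $n\le\ell$, this yields $\ell(N)\ll\log\log N/\log\log\log N$. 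Without this step your argument delivers at best the exponents of the Gyarmati--S\'ark\"ozy--Stewart bound, not the claimed $(\log\log N)^3/\log\log\log N$. Note also that this progression bound is a statement about \emph{arbitrary} homogeneous progressions in $V$; no additive structure from the subset sums is needed or used, so your hope of ``exploiting that all terms are subset sums of a common set $A$'' is a red herring.

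A secondary inaccuracy: your second step describes an iteration (``each step halves the dimension \dots\ iterating $O(\log d)$ times'') with primes imposing congruence conditions directly on subcubes. That is not how the machinery works. The Linnik/sieve/character-sum input is used once, inside the proof that among five sets $B_1,\dots,B_5$ with $B_i+B_j\subset V\cap[1,N]$ one must satisfy $|B_i|\ll(\log N)^M$; the cube itself is handled by a single splitting $A=A_1\cup\dots\cup A_5\cup R$, setting $B_i$ equal to the $h$-fold disjoint sumset of $A_i$, and playing the lower bound $|B_i|\ge(|A_i|-h)^h/(h!\,g(h,N))$ against $|B_i|\le f(N)$. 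The exponent $3$ then arises as $(h!)^{2/h}\asymp h^2=(\log\log N)^2$ times one factor of $\ell(N)\asymp\log\log N/\log\log\log N$, with $((\log N)^M)^{1/h}=O(1)$ for $h=\log\log N$. Since you cite the correct theorem of \cite{DietmannandElsholtz:2012} to invoke, this part is recoverable, but as written the quantitative bookkeeping that produces the stated exponent is not justified.
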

Note that we first assume here that the $a_i$ are distinct, as in the
course of proof we will invoke Lemma \ref{progressionfreelemma}
and Lemma \ref{vancouver}; otherwise, we
have to allow for another factor $O(\log \log N/\log \log \log N)$
in the upper bound for $d$, as this is the maximal multiplicity of
the elements $a_i$.\\
We are not aware of any previous result in the literature on this problem,
other than the above mentioned bound by
Gyarmati, S\'{a}rk\"{o}zy, and Stewart (\cite[Theorem 6]{GSS:2003}):
their upper bound
$d=O((\log N)^3 (\log \log N)^{-1/2})$ for the dimension of subset sums in $W$
 remains valid for subset sums in $V$. 

While it is easy to see that for any fixed exponent $k$ the set of $k$-th
powers cannot contain a Hilbert cube of infinite dimension,
Hegyv\'{a}ri and S\'{a}rk\"{o}zy 
\cite[page 314]{HegyvariandSarkozy:1999}
mention that it is
an open question whether or not 
there exists an infinite Hilbert cube in the set of pure powers.
They relate this question to the widely open Pillai conjecture
stating that a fixed integer
can be the gap between pure powers only finitely often.
Here we answer the question by
Hegyv\'{a}ri and S\'{a}rk\"{o}zy on Hilbert cubes in the set of pure powers, 
and with hindsight the problem was not as
difficult as anticipated.
\begin{theorem}{\label{infinitecubeV}}
There is no infinite Hilbert cube in the set $V$ of pure powers.
\end{theorem}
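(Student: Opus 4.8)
The plan is to argue by contradiction: suppose $H=H(a_0;a_1,a_2,\dots)\subseteq V$ is an infinite Hilbert cube. Since the generators need not be distinct, there is a quick case to dispose of first. \emph{If some value $v$ occurs infinitely often among the $a_i$}, say $a_{i_1}=a_{i_2}=\cdots=v$, then $H(a_0;a_{i_1},\dots,a_{i_d})=\{a_0+jv:0\le j\le d\}\subseteq H\subseteq V$ for every $d$, so the whole infinite arithmetic progression $\{a_0+jv:j\ge 0\}$ lies in $V$. This is impossible: $|V\cap[1,N]|=O(\sqrt N)$, so $V$ has density $0$ and cannot contain an infinite progression. (Alternatively: $V\cap[1,N]$ has no progression of length $\gg\log N$, while here the progression has length $\to\infty$ but $N=a_0+dv$ grows only linearly in $d$.) Hence from now on every value occurs finitely often; then $a_i\to\infty$, and, passing to a sub-cube realising distinct values, we may relabel so that $a_1<a_2<\cdots$.

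Next I would combine the finite bound with a localisation. Applying Corollary \ref{cor:Hilbert-pure} to the initial sub-cubes $H_d:=H(a_0;a_1,\dots,a_d)\subseteq V\cap[1,N_d]$ with $N_d=a_0+a_1+\cdots+a_d$ gives $d\le 5(\log N_d)^2$, so $N_d\ge\exp(c\sqrt d)$ for large $d$; in particular the generators are forced to grow fast (slow growth, e.g. polynomial $a_i$, is already contradictory). For the fast case, fix $d$ and translate a sub-cube far out: for a finite set $G$ of indices $>d$, the translate $\big(a_0+\sum_{i\in G}a_i\big)+H(0;a_1,\dots,a_d)$ is again contained in $V$ and lies in an interval $[M,M+\sigma_d]$ with $\sigma_d=a_1+\cdots+a_d$ fixed and $M$ as large as we wish (there are infinitely many generators, all $\to\infty$). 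If $M\ge\sigma_d^{2}$ then $[M,M+\sigma_d]$ contains at most one perfect $p$-th power for each prime $p$, hence $O(\log M)$ pure powers in total and no progression of length $k=O(\log M)$. Feeding the translate and this AP-freeness into Lemma \ref{growth} yields $2\big(\tfrac{k}{k-1}\big)^{d-1}-1\le|H_d|=O(\log M)$, i.e. $d\ll\log M\log\log M$. Choosing $G$ so that $M$ is not much above $\sigma_d^{2}$ — possible when the partial sums of the generators beyond index $d$ sweep through $[\sigma_d^{2},2\sigma_d^{2}]$, which the growth lower bound helps guarantee — then gives $d\ll\log\sigma_d\log\log\sigma_d$, i.e. $\sigma_d\ge\exp(cd/\log d)$, which one feeds back in to keep sharpening the growth requirement.

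The step I expect to be the genuine obstacle is the \emph{very} fast (tower-type) regime, where $a_{d+1}>\sigma_d^{2}$ for all large $d$: then one cannot localise at scale $\approx\sigma_d^{2}$, and pure counting together with AP-freeness no longer suffices, so the genuine arithmetic of perfect powers (the Pillai-type phenomenon alluded to in the introduction) must enter. The feature that should make this unconditional, unlike a bare Pillai statement, is that a full Hilbert cube is available: for each fixed finite set of generators and for arbitrarily large base points one gets an entire translated sub-cube of $V$ squeezed into a tiny interval, and the number of such configurations below $X$ can be controlled — starting from the elementary estimate $\#\{z\le X:\ z,z+c\in V\}\ll_c X^{1/3}$ and using the extra directions of the cube to do better — by a quantity growing more slowly than the number of partial sums $s_n=a_0+a_1+\cdots+a_n$ below $X$, which tends to infinity; that mismatch is the contradiction sought.
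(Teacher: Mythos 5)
Your reduction to the case of distinct, rapidly growing generators is fine, and the localisation step (translating a fixed sub-cube $H(0;a_1,\dots,a_d)$ into a short interval $[M,M+\sigma_d]$, noting that such an interval with $M\ge\sigma_d^2$ contains $O(\log M)$ pure powers, and feeding this into Lemma \ref{growth}) is correct as far as it goes. But this mechanism only ever yields $d\ll\log M\log\log M$ with $M$ at least the smallest available translate, so it forces growth of the generators rather than producing a contradiction; as you acknowledge, the entire burden of the proof then falls on the ``tower-type'' regime, and the sketch you offer there does not close the gap. The number of partial sums $s_n$ below $X$ does tend to infinity, but under tower-type growth it can grow more slowly than any prescribed unbounded function of $X$, so bounding the number of admissible configurations below $X$ by $X^{1/3}$ (or by anything unbounded) creates no mismatch. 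To obtain a contradiction along your lines you would need the number of $z$ with $z,z+c_1,\dots,z+c_d\in V$ to be \emph{finite} for fixed shifts; since for large $z$ at most one of these values can be a square, this amounts to finiteness of solutions of $x^p-y^q=c$ with exponents $p,q\ge 3$ allowed to vary --- a Pillai-type statement that is open, and exactly the obstruction Hegyv\'ari and S\'ark\"ozy pointed to when they posed the problem.

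The paper's (unconditional) proof avoids this by a local argument at the prime $2$, which is the key idea missing from your proposal. An infinite cube yields five infinite sets $A_1,\dots,A_5$ with $A_1+\cdots+A_5\subset V$; writing each element as $2^{u}(4s+e)$ with $e=\pm1$ and pigeonholing over the parameters $(u,e)$, one finds two of the sets for which every pairwise sum is exactly divisible by $2^{g}$ for a controlled $g\ge1$, hence must be a $q$-th power for a prime $q$ dividing $g$ --- that is, $q$ ranges over a \emph{fixed finite set}. This collapses the Pillai-type question to fixed exponents, where finiteness of the relevant equations is available, and the argument of Theorem 4 of \cite{GSS:2003} then finishes the proof. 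Without some such device for pinning down the exponents, your approach cannot be completed unconditionally.
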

For the set of powerful numbers this question seems more 
difficult. There are, for example, infinitely many
pairs of consecutive powerful numbers.
In fact solutions of the Pell equation $x^2-8y^2=1$ provide examples.
However, one can give conditional results using either of the following
two well known conjectures.
\begin{conjecture}[$abc$ conjecture \cite{Masser}]
Let $\varepsilon>0$.
Suppose that $a+b=c$ for non-zero integers
$a,b,c$ with $(a,b,c)=1$, and let
\[
  P = \prod_{p \, | \, abc} p,
\]
the product taken over all primes $p$ dividing $abc$. Then
\[
  \max\{|a|, |b|, |c|\} \ll_\varepsilon P^{1+\varepsilon}.
\]
\end{conjecture}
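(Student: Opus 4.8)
The statement to be established is the $abc$ conjecture, which is one of the central open problems of Diophantine number theory rather than a lemma admitting a routine argument; so the plan below should be read as a survey of the available lines of attack together with an honest assessment of where each one stalls. The natural first move is to translate the arithmetic of the coprime triple $a+b=c$ into the language of elliptic curves and heights. To such a triple one attaches the Frey--Hellegouarch curve $y^2=x(x-a)(x+b)$, whose minimal discriminant is essentially $(abc)^2$ and whose conductor has radical comparable to $P$. Under this dictionary the $abc$ conjecture becomes equivalent, up to an arbitrary $\varepsilon$ in the exponent, to Szpiro's conjecture bounding the discriminant of an elliptic curve by a fixed power of its conductor, and the plan would be to prove the required inequality on the modular side, where the Taniyama--Shimura machinery and the analytic theory of modular forms are available.

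First I would attempt to bound the Faltings height of the associated curve by $O(\log P)$, since a sufficiently strong such comparison is known to imply Szpiro's inequality. The tools one would bring to bear are the modular parametrisation $X_0(N)\to E$, upper bounds for its degree, and Arakelov-theoretic estimates for the arithmetic self-intersection of the relative dualising sheaf. A second and essentially independent route is analytic: one would try to exploit the average distribution of the radical function through sieve methods or through automorphic $L$-functions, aiming to show that triples with $\max\{|a|,|b|,|c|\}$ much larger than $P^{1+\varepsilon}$ would be too numerous to be consistent with known equidistribution statements.

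The main obstacle is that neither route is known to yield the full strength of the conjecture, which is exactly why it remains open. On the geometric side every available bound for the modular degree, or equivalently for the Faltings height, loses a factor that is polynomial rather than subexponential in the conductor, so one obtains only weak or conditional statements: for example the unconditional bound of Stewart and Yu of the shape $\log\max\{|a|,|b|,|c|\}\ll P^{1/3+\varepsilon}$, or sharper results contingent on the generalised Riemann hypothesis for symmetric-power $L$-functions. On the analytic side the known equidistribution results are simply not quantitatively sharp enough to rule out the conjecturally extremal triples. Closing this gap appears to require a genuinely new input, whether the inter-universal Teichm\"{u}ller theory of Mochizuki, whose claimed proof has not been accepted by the community, or some as yet undiscovered comparison between height and conductor; I would not expect any of the classical steps sketched above to suffice on its own.
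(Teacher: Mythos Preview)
The statement you were asked to prove is labelled in the paper as a \emph{conjecture}, not as a theorem or lemma: it is the $abc$ conjecture, stated verbatim and attributed to Masser. The paper makes no attempt to prove it and contains no proof for you to compare against; the conjecture is invoked only as a hypothesis in Theorem~\ref{infinitecubeW}, where it is assumed in order to conclude that there is no infinite Hilbert cube in the powerful numbers.

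Your write-up correctly identifies the $abc$ conjecture as a genuinely open problem and gives a reasonable sketch of the known partial approaches (Frey curves, Szpiro, Stewart--Yu, Mochizuki) together with the reasons each one falls short. That is an accurate assessment of the state of the art, but it is not a proof, nor is one expected here: the appropriate response to this item is simply to note that the statement is a conjecture that the paper assumes rather than proves, and that no proof is required or possible with current methods.
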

\begin{conjecture}[Schinzel-Tijdeman conjecture \cite{ST}]
\label{red2}
If a polynomial $P$ with rational coefficients has 
at least three simple zeros, then the equation $y^2z^3=P(x)$ has only
finitely many solutions in integers $x,y,z$ with $yz\neq 0$.
\end{conjecture}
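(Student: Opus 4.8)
The statement is the Schinzel--Tijdeman conjecture, a genuinely deep assertion; the route I would attempt is to recast it as a question about \emph{powerful values} of $P$ and then to reduce it to integral points on a family of curves. Since the integers of the form $y^2z^3$ (with $yz\neq 0$) are, up to sign, exactly the powerful integers --- those in whose prime factorisation every exponent is at least two --- the claim is equivalent to the assertion that $|P(x)|$ is powerful for only finitely many $x\in\mathbb{Z}$. First I would clear denominators so that $P\in\mathbb{Z}[x]$, and record three simple zeros $\alpha,\beta,\gamma$. Bounding the common factors of the various linear factors $x-\alpha_i$ by the pairwise resultants, one checks that for all but boundedly many $x$ and every large prime $p\mid x-\alpha_i$ coming from a simple-root factor one has $v_p(P(x))=v_p(x-\alpha_i)$. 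Powerfulness of $P(x)$ then forces each simple-root factor $x-\alpha$, $x-\beta$, $x-\gamma$ to be \emph{almost powerful}: its squarefree part is supported on a fixed finite set of primes.

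Next I would pass to curves. Writing each powerful value in its canonical form $P(x)=s^2t^3$ with $t$ squarefree, fix the cube-part $t$ and consider the affine curve
\[
   C_t:\qquad s^2t^3=P(x)\quad\text{in the variables }(x,s).
\]
Pulling the even powers out of $P$, the curve $C_t$ is birational to a hyperelliptic curve $s^2=(\mathrm{const})\,g(x)$, where $g$ is the product of the factors $x-\alpha_i$ of odd multiplicity; since $\alpha,\beta,\gamma$ are \emph{simple}, the polynomial $g$ is squarefree of degree at least three, so $C_t$ has genus at least one. By Siegel's theorem on integral points, each individual $C_t$ carries only finitely many integral points. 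Thus the three simple zeros are precisely what makes every member of the family a positive-genus curve, and this is where the hypothesis of the conjecture is consumed.

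The main obstacle is that $t$ ranges over infinitely many squarefree integers, and finiteness on each $C_t$ does not by itself bound the union: the $C_t$ are quadratic twists of one another, and controlling integral points uniformly across an infinite family of twists is exactly the hard part. What is really needed is a \emph{uniform} bound on the cube-part $t$ (equivalently on $z$), showing that $t^3\mid P(x)$ with $x$ large can occur only for bounded $t$. This is the analogue, for the cube-part, of what Baker's theory of linear forms in logarithms delivers for the equation $y^m=P(x)$, where Schinzel and Tijdeman bounded the exponent $m$ effectively. Adapting that machinery to bound $t$ here --- or deriving such a bound from a sufficiently strong Diophantine hypothesis --- is where I expect the real work to lie, and the absence of an unconditional such bound with present methods is exactly why the statement is offered as a conjecture rather than a theorem.
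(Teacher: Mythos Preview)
The statement you are asked to prove is explicitly labeled a \emph{Conjecture} in the paper --- the Schinzel--Tijdeman conjecture --- and the paper does not offer any proof of it. It is stated only to serve as a hypothesis in Theorem~\ref{infinitecubeW}, where the authors show that \emph{assuming} either the $abc$ conjecture or the Schinzel--Tijdeman conjecture, the set $W$ of powerful numbers contains no infinite Hilbert cube. There is therefore no ``paper's own proof'' against which to compare your attempt.

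Your proposal is not a proof either, and to your credit you say so in the final paragraph: you reduce the question to bounding the cube-part $t$ uniformly across an infinite family of quadratic twists $C_t$, observe that Siegel's theorem on each individual $C_t$ does not control the union, and conclude that ``the absence of an unconditional such bound with present methods is exactly why the statement is offered as a conjecture rather than a theorem.'' That diagnosis is correct. The heuristic reduction you sketch (powerful values of $P$ force each simple-root linear factor to be almost powerful, then pass to a hyperelliptic twist family of genus $\ge 1$) is a sensible way to see why the conjecture is plausible and where the difficulty lies, but it stops short of a proof --- exactly as the paper does, since none is claimed.
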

\begin{theorem}\label{infinitecubeW}
Assuming the $abc$ conjecture, or assuming the Schinzel-Tijdeman
conjecture, 
there is no infinite Hilbert cube in the set $W$ of powerful numbers.
\end{theorem}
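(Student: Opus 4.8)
The plan is to argue by contradiction, assuming that an infinite Hilbert cube $H(a_0;a_1,a_2,\ldots)\subset W$ exists, and to extract from it an infinite family of solutions to a polynomial equation that the $abc$ conjecture (respectively the Schinzel--Tijdeman conjecture) forbids. First I would reduce to the homogeneous case: since $H(a_0;a_1,a_2,\ldots)\subset W$ implies $H(a_1;a_2,a_3,\ldots)\subset W$, it costs nothing to assume $a_0=0$, so that all finite subset sums $\sum_{i\in F}a_i$ are powerful. Next, fix a small subcube: consider the four sums $0,\ a_1,\ a_2,\ a_1+a_2$ together with, for each further index $j\ge 3$, the four translated sums $a_j,\ a_1+a_j,\ a_2+a_j,\ a_1+a_2+a_j$. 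Writing $x=a_j$, these eight quantities are the values at $x$ of the eight linear polynomials $x+s$, where $s$ ranges over the subset sums of $\{a_1,a_2\}$ augmented by $0$; in particular, as $j\to\infty$ we obtain infinitely many integers $x$ for which a \emph{fixed} product such as $x(x+a_1)(x+a_2)(x+a_1+a_2)$ is a product of powerful numbers, hence itself powerful up to bounded factors.

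For the $abc$-route, I would apply the standard consequence of the $abc$ conjecture that powerful values of polynomials are scarce. Concretely, take the identity relating three of the shifted sums: with $x=a_j$ one has, say, $(x+a_1+a_2)-(x+a_1)=(x+a_2)-x=a_2$, and more usefully $(x+a_1)(x+a_2)$ versus $x(x+a_1+a_2)$ differ by the constant $a_1a_2$, giving $x(x+a_1+a_2)+a_1a_2=(x+a_1)(x+a_2)$, an $A+B=C$ relation among powerful numbers (after clearing the bounded common factors coming from $\gcd$s, which are absorbed into the implied constants). Both outer terms are powerful, so each has radical $\ll (\text{itself})^{1/2}$; the $abc$ conjecture then forces $\max \ll_\varepsilon (x^{1/2}\cdot x^{1/2}\cdot \mathrm{const})^{1+\varepsilon}=x^{1+\varepsilon}$ up to the bounded middle term, which is consistent, so one needs the sharper form: apply $abc$ to $x\cdot(x+a_1+a_2)$, $a_1a_2$, $(x+a_1)(x+a_2)$ and use that the full product $x(x+a_1)(x+a_2)(x+a_1+a_2)$ is powerful to get radical $\ll x^{2}\cdot$(that product)$^{1/2}\asymp x^{2}$ while the product is $\asymp x^4$; this is the genuine contradiction for $x$ large once one also feeds in the remaining shifted sums (using $a_3$ as well, i.e. a $3$-dimensional subcube, to make the exponent count strictly favourable). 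Alternatively, for the unconditional-looking second route, rewrite the condition that all eight sums $x+s$ are powerful as: the polynomial $P(x)=\prod_{s}(x+s)$, which has eight distinct simple roots $-s$ (distinct because the $2^3$ subset sums of $\{a_1,a_2,a_3\}$ are distinct—here one may assume the $a_i$ distinct and, if not, pass to a subcube where they are, or note that equal $a_i$ only shrink the cube), satisfies $P(x)=y^2z^3\cdot(\text{bounded})$ infinitely often, since a powerful number is of the form $u^2v^3$; this directly contradicts the Schinzel--Tijdeman conjecture, which allows only finitely many such $x$.

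The main obstacle I anticipate is bookkeeping the bounded ``junk'': the subset sums need not be coprime, so $\gcd(x+s,x+s')$ divides $s-s'$ and is bounded in terms of $a_1,a_2,a_3$ only, and a powerful number times a bounded number is not quite powerful—one must pass to the squarefree kernel and track how the finitely many bad primes contribute, which is routine but must be done carefully so that the exponent inequality ($2 < 4\cdot\tfrac12$ is not enough; one wants something like $\text{rad} \ll x^{2+\varepsilon}$ against size $\asymp x^{d'}$ with $d'\ge 3$) comes out strictly. The cleanest packaging is: choose a $3$-dimensional homogeneous subcube, form $P(x)$ of degree $8$ with $8$ distinct simple roots, and observe $P(a_j)\in W$ for infinitely many $j$; then either invoke the known fact (a consequence of $abc$, cf. the references around $S_2$ in the introduction) that a polynomial of degree $\ge 3$ with no repeated roots takes powerful values only $O(1)$ times, or invoke Conjecture \ref{red2} directly via the representation of powerful numbers as $y^2z^3$. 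In both cases the infinitude of $j$ gives the contradiction, completing the proof.
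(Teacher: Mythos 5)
Your Schinzel--Tijdeman route is essentially the paper's argument: fix a finite set of shifts, form the product polynomial, note that a product of powerful numbers is powerful \emph{exactly} (if $p\mid mn$ with $m,n$ powerful then $p^2\mid m$ or $p^2\mid n$), so the ``bounded junk'' you worry about never arises; then write powerful numbers as $y^2z^3$ and contradict Conjecture \ref{red2}. The paper simply uses $P(x)=x(x+a_1)(x+a_1+a_2)$ evaluated on the infinite subcube $H(a_0;a_3,a_4,\ldots)$; the three roots $0,-a_1,-(a_1+a_2)$ are automatically distinct, so there is no need for your degree-$8$ polynomial, whose roots need \emph{not} all be distinct even when the $a_i$ are distinct (take $a_3=a_1+a_2$). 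One genuine error in this part: your reduction to $a_0=0$ is backwards. From $H(0;a_1,a_2,\ldots)\subset W$ one gets $H(a_1;a_2,\ldots)\subset W$, not conversely, so an inhomogeneous infinite cube cannot be converted into a homogeneous one. This is easily repaired --- let $x$ range over $H(a_0;a_3,a_4,\ldots)$ and use the shifts $x,\,x+a_1,\,x+a_1+a_2$, exactly as the paper does --- but as written the step is invalid.

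The direct $abc$ argument has a gap that cannot be closed by ``feeding in more shifted sums''. In any identity $A+B=C$ in which $B$ is a bounded constant and $A,C$ are products of shifts $x+s$ whose values are powerful, necessarily $\deg_x A=\deg_x C=d$ with matching leading coefficients, so $\mathrm{rad}(ABC)\ll (AC)^{1/2}\asymp x^{d}$ while $\max\{|A|,|B|,|C|\}\asymp x^{d}$; the $abc$ conjecture then yields only $x^{d}\ll_\varepsilon x^{d(1+\varepsilon)}$, which is vacuous, and enlarging the number of factors scales both sides identically. Your own remark that $2<4\cdot\tfrac12$ fails is precisely this obstruction, and it persists for every $d$, so the promised ``strictly favourable exponent count'' never materialises. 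The fallback you mention --- citing the known $abc$-consequence that a polynomial with at least three simple zeros takes powerful values only finitely often --- does rescue this half of the theorem, but it is really the Schinzel--Tijdeman route again, via Walsh's theorem that $abc$ implies Conjecture \ref{red2}. The paper's $abc$ proof is genuinely different and avoids polynomials altogether: it invokes Granville's theorem that $abc$ implies $t_{n+2}-t_n\to\infty$ for the sequence $(t_n)$ of powerful numbers, and observes that the three cube elements $a_0+a_i<a_0+a_1+a_i<a_0+a_1+a_2+a_i$ are powerful and confined to a window of fixed width $a_1+a_2$ for infinitely many $i$.
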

Let us remark that Walsh \cite{Walsh:1999} proved that the $abc$ conjecture
implies the Schinzel-Tijdeman conjecture, but below we give two independent
and quite short proofs for Theorem \ref{infinitecubeV}, using either
the $abc$ conjecture or the Schinzel-Tijdeman conjecture.

\textbf{Acknowledgement}: 
We would like to thank Noga Alon for
 pointing out to
us the connection to the work of Erd\H{o}s and Rado
\cite{ErdosandRado:1960}.
In an attempt to avoid their
conjecture we got the idea of the present approach.
We would also like to thank J\"org Br\"udern for a comment
regarding  Lemma \ref{growth}, and the referee.

\section{Proofs of Theorems \ref{thm:squares} and 
\ref{thm:kprogression}}
The following observation is key for proving Lemma \ref{growth}.
\begin{lemma}\label{dense-progression}
Let $0<\alpha<1$, let $h$ be a non-zero integer and let $B$ be a non-empty
set of distinct
integers. If $|B \cap (B+h)|> (1-\alpha) |B|$, 
then $B$ contains an arithmetic progression
of length $\lfloor \frac{1}{\alpha} \rfloor +1$ and difference $h$.
\end{lemma}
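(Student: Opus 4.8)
The plan is to decompose $B$ into maximal arithmetic progressions of common difference $h$ and then to count, so throughout we may assume $B$ is finite (the case of interest). First I would put $B' := B \cap (B+h)$, so that the hypothesis becomes $|B| - |B'| = |B \setminus B'| < \alpha |B|$, and note that an element $b \in B$ lies in $B'$ exactly when its predecessor $b-h$ also lies in $B$. Since an arithmetic progression of difference $h$ and one of difference $-h$ coincide as sets, there is no loss in assuming $h > 0$.

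Next I would write $B$ as the disjoint union of its maximal chains of difference $h$, say $B = C_1 \sqcup \cdots \sqcup C_r$, where $C_j = \{c_j, c_j + h, \ldots, c_j + (\ell_j - 1)h\}$ with $c_j - h \notin B$ and $c_j + \ell_j h \notin B$; this decomposition is well defined and finite. For a fixed chain $C_j$ the bottom element $c_j$ is not in $B'$, since its predecessor $c_j - h$ lies outside $B$ by maximality, whereas each of the remaining $\ell_j - 1$ elements of $C_j$ does have its predecessor inside $C_j \subseteq B$ and hence lies in $B'$. As $B' \subseteq B$, every element of $B'$ belongs to exactly one chain, so summing gives $|B'| = \sum_{j=1}^r (\ell_j - 1) = |B| - r$. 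Thus $r = |B| - |B'|$, which by hypothesis is strictly less than $\alpha |B|$.

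The conclusion is then an averaging step: from $\sum_{j=1}^r \ell_j = |B|$ and $r < \alpha |B|$ we obtain $\max_j \ell_j \geq |B|/r > 1/\alpha$. Since $\max_j \ell_j$ is a positive integer strictly exceeding $1/\alpha$, it is at least $\lfloor 1/\alpha \rfloor + 1$ — one checks this directly, distinguishing whether $1/\alpha$ is an integer or not — and the longest chain $C_j$ is then an arithmetic progression of difference $h$ and length at least $\lfloor 1/\alpha \rfloor + 1$ contained in $B$, as required.

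I do not anticipate a genuine obstacle. The only points needing a little care are the verification that the maximal chains partition $B$ and that each element of $B'$ is charged to exactly one chain (both immediate from maximality and disjointness), together with the final rounding from $> 1/\alpha$ to $\geq \lfloor 1/\alpha \rfloor + 1$.
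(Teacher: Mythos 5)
Your proof is correct and is essentially the paper's argument in a slightly different packaging: the paper stratifies $B$ by the forward run length $r(b)$ (the number of steps until $b+rh$ leaves $B$) and notes that each level set has fewer than $\alpha|B|$ elements, while you partition $B$ into maximal $h$-chains and count the chains; both reduce to the observation that there are fewer than $\alpha|B|$ run endpoints, followed by pigeonhole. All your steps, including the chain decomposition, the identity $|B'|=|B|-r$, and the final rounding from $>1/\alpha$ to $\ge \lfloor 1/\alpha\rfloor+1$, check out.
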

\begin{proof}[Proof of Lemma \ref{dense-progression}]
Consider the shift operator $f:\Z \rightarrow \Z$, defined by $f(b)=b+h$,
and its iterations.
For given $b \in \Z$,
let $r(b)$ denote the least non-negative integer $r$ with 
\[
\{b, f(b), \ldots ,f^{r-1}(b)\} \subset  B, \text{ but }f^r(b)=b+rh\not\in B.\]
The  assumption $|\{b \in B:  b+h \in B\}| > (1-\alpha)|B|$
implies that for each fixed non-negative integer $r$, 
there are less than $\alpha |B|$ elements $b \in B$ with
this given value $r=r(b)$. Hence, for $k \in \N$
the number of elements $b\in B$
with $r(b) \leq k$ is less than 
$k \alpha |B| $.
If $k \alpha |B| \le |B|$, then
there exists a $b \in B$ with 
$r(b)\geq k+1$. For $k=\lfloor \frac{1}{\alpha}\rfloor$ this gives
$r(b) \geq \lfloor \frac{1}{\alpha}\rfloor +1$. 

Therefore $B$ contains
the arithmetic progression $\{b, b+h ,\ldots, b+(r-1)h\}$
of length $r\geq \lfloor \frac{1}{\alpha}\rfloor +1$. 
\end{proof}
\begin{proof}[Proof of Lemma \ref{growth}]
Let
\[
  c = \frac{k}{k-1}
\]
and
\[
  H_i=a_0+\{0,a_1\} + \cdots + \{0,a_i\}.
\]
Let us first assume that $a_0 \neq 0$.
Suppose that $|H|< 2c^{d-1}$.
Then there is some $i \in \{1, \ldots , d-1\}$ such that
\[
  \frac{|H_{i+1}|}{|H_i|}<c.
\]
Noting that
\[
  |H_{i+1}|=|H_i+\{0,a_{i+1}\}|=2|H_i|-|(H_i+a_{i+1}) \cap H_i| < c |H_i|,
\]
for this $i$ we find
\[
  |(H_i+a_{i+1})\cap H_i|> (2-c)|H_i|.
\]
Then by Lemma \ref{dense-progression} and our definition of $c$ the set 
$H_i$ contains an arithmetic progression of length 
\[
  \lfloor \frac{1}{c-1}\rfloor +1 = k
\]
which is a contradiction, as $S$ does not contain a progression of length $k$.\\
Finally, if $a_0=0$, we argue as above, first ignoring that the empty sum is
by definition not part of the Hilbert cube, but then deduct it by means of the
$-1$ expression in the statement of the result.

\end{proof}

As in our previous work
\cite{DietmannandElsholtz:2012}
 we make use of the following two results on squares:

\begin{lemma}[Theorem 9 of Gyarmati \cite{Gyarmati:2001}]
\label{kati}
Let $S_2$ denote the set of integer squares.
For sufficiently large $N$ the following holds true: 
If $C, D \subset \{1, \ldots, N\}$ such that $C+D\subset S_2$, then
\[
  \min\{|C|, |D|\} \leq 8 \log N.
\]
\end{lemma}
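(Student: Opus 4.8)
The plan is to reduce the two--variable condition $C+D\subseteq S_2$ to a divisor bound, and then to bootstrap it. Assume without loss of generality that $|C|\le|D|$, so that we must bound $|C|$; if $|D|\le 8\log N$ there is nothing to prove, so assume both sets are large. The elementary core is this: for distinct $c,c'\in C$ with $c'>c$ and any $d\in D$, the integers $c+d$ and $c'+d$ are perfect squares, say $w^2$ and $z^2$, so $(z-w)(z+w)=c'-c$; as $d$ runs through $D$ the sum $z+w$ strictly increases (the integers $c+d$ being distinct), so we obtain $|D|$ distinct divisors of $c'-c$, whence $|D|\le\tau(c'-c)$. Symmetrically $|C|\le\tau(d-d')$ for all $d\ne d'\in D$, which already gives $\min\{|C|,|D|\}\le N^{o(1)}$. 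The same relation yields a gap principle: $c'-c\ge z+w\ge 2\sqrt{c}+1$, so consecutive elements of $C$ satisfy $\sqrt{c_{i+1}}\ge\sqrt{c_i}+1$, hence $c_i\ge i^2$ and $|C|\le\sqrt N$.

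To get down to $O(\log N)$ one must combine these. Since every consecutive difference $c_{i+1}-c_i$ has at least $|D|\ge|C|=:m$ divisors, it is at least the least integer with at least $m$ divisors, namely $\exp(\Theta(\log m\,\log\log m))$; summing the $m-1$ gaps against $c_m\le N$ already improves the bound to $m\le\exp\!\big(O(\log N/\log\log N)\big)$. Pushing further requires using that, for a fixed $c_i\in C$, the set $\{c_i+d:d\in D\}$ is a set of $m$ squares which is simultaneously a translate (by $c_i-c_j$) of such a set for every $c_j\in C$; I would try to show that this degree of rigidity forces many squares in arithmetic progression and hence contradicts Fermat's theorem that four squares never form one, together with its quantitative refinements bounding the number of squares in a progression of given length. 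Making this implication precise --- passing from ``a large set with the $S_2$--sumset property'' to ``a long arithmetic progression of squares'' --- is the step I expect to be the main obstacle, and presumably carries the real content of Gyarmati's theorem.

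An alternative route, natural given that this lemma arises in work on Diophantine $m$--tuples, is to fix three elements $d_1,d_2,d_3\in D$ (the case $|D|\le 2$ being trivial), interpret the simultaneous conditions $x^2+(d_i-d_1)\in S_2$ as an intersection of conics, i.e.\ an elliptic curve $E$ whose coefficients are $O(N)$, and observe that $c\mapsto\big(\sqrt{c+d_1},\sqrt{c+d_2},\sqrt{c+d_3}\big)$ embeds $C$ into the integral points of $E$ of $x$--height $\ll\sqrt N$; bounding those by the standard combination of height gap principles with Baker's theory of linear forms in logarithms again yields a bound of logarithmic shape. In either approach a little bookkeeping of absolute constants, together with dispatching the few cases with $|D|$ bounded directly via $|C|\le|D|$, is needed to reach the explicit value $8\log N$.
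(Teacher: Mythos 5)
This lemma is quoted in the paper, not proved there (it is Theorem 9 of Gyarmati's paper), so the only question is whether your argument establishes it, and it does not. Your elementary steps are all correct as far as they go: the divisor bound $|D|\le\tau(c'-c)$, the gap principle $\sqrt{c_{i+1}}\ge\sqrt{c_i}+1$, and their combination giving $\min\{|C|,|D|\}\le\exp\bigl(O(\log N/\log\log N)\bigr)$. But that is still superpolynomial in $\log N$, and you acknowledge yourself that the descent from there to $O(\log N)$ is ``the main obstacle'' --- that descent \emph{is} the theorem, and neither of your proposed completions supplies it. The first has a concrete flaw: the sets $\{c_i+d:d\in D\}$ are mutual translates of one another, but a translate of a set of squares does not produce squares in \emph{arithmetic progression} unless the shifts $c_i-c_j$ (equivalently the $c_i$) themselves form an arithmetic progression, which is not given; so Fermat's theorem on four squares in progression, and the quantitative refinements on squares inside a genuine progression, cannot be brought to bear. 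The second route also cannot deliver the bound: Baker's method controls the \emph{height} of integral points on the elliptic curve, not their \emph{number}, and uniform bounds for the number of integral points on a curve with coefficients of size $N$ are of shape $N^{o(1)}$ at best (they depend on the rank, or on the number of prime factors of the discriminant), which lands you back where the divisor argument already was.

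The actual proof is local rather than global, and is precisely the machinery the present paper deploys for pure powers in Proposition \ref{prop:tripleset}: reduce $C$ and $D$ modulo many primes $p$; since every element of $C+D$ is a square, the reductions $\overline{C}_p+\overline{D}_p$ avoid the quadratic nonresidues mod $p$, so the Erd\H{o}s--Shapiro character sum estimate (Lemma \ref{pacific}, applied with the Legendre symbol) forces $\nu_C(p)\,\nu_D(p)\ll p$, i.e.\ for each $p$ one of the two sets occupies only $O(\sqrt p)$ residue classes; feeding this into Gallagher's larger sieve (Lemma \ref{gallagher}) over the primes $p\ll(\log N)^2$ yields $\min\{|C|,|D|\}\le 8\log N$. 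If you want to reconstruct the lemma rather than cite it, that is the argument to write out; no purely archimedean divisor-counting or gap argument of the kind you propose is known to reach a logarithmic bound.
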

\begin{lemma} [Fermat, Euler
(see Volume II, page 440 of \cite{Dickson:1966})]{\label{Fermat}}
There are no four integer squares in arithmetic progression.
\end{lemma}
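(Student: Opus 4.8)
The plan is to prove this by Fermat's method of infinite descent. Suppose for contradiction that there is a four-term arithmetic progression of squares, written $x_1^2<x_2^2<x_3^2<x_4^2$ with positive integers $x_i$ and common difference $D=x_2^2-x_1^2=x_3^2-x_2^2=x_4^2-x_3^2>0$; among all such progressions I would pick one with $x_4$ minimal. First normalise: if a prime $\ell$ divided all four $x_i$, dividing through by $\ell$ would yield a strictly smaller such progression (common difference $D/\ell^2$), contradicting minimality, so $\gcd(x_1,x_2,x_3,x_4)=1$; moreover, if $\ell\mid x_i$ and $\ell\mid x_{i+1}$ then $\ell^2\mid D$, whence $\ell$ divides every $x_j$, so in fact the $x_i$ are \emph{pairwise} coprime. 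A quick analysis of $x_1^2+x_3^2=2x_2^2$ and $x_2^2+x_4^2=2x_3^2$ modulo $8$ then forces all four $x_i$ to be odd (and $8\mid D$).

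The algebraic heart of the argument is the pair of identities
\[
  x_2^4-(x_1x_3)^2=D^2,\qquad x_3^4-(x_2x_4)^2=D^2,
\]
which follow at once from $x_1^2x_3^2=(x_2^2-D)(x_2^2+D)$ and $x_2^2x_4^2=(x_3^2-D)(x_3^2+D)$. Thus $D^2$ factors as $(x_2^2-x_1x_3)(x_2^2+x_1x_3)$ with both factors even and, by the pairwise coprimality above, sharing only a bounded common factor; since their product is a square, each is, up to a controlled power of $2$, a perfect square, and likewise for the companion factorisation of $D^2$ from $x_3^4-(x_2x_4)^2=D^2$. Equivalently, substituting $x_1=m-n$, $x_3=m+n$ and $x_2=p-q$, $x_4=p+q$ turns the progression into a pair of Pythagorean relations $m^2+n^2=x_2^2$, $p^2+q^2=x_3^2$ subject to $mn=pq$ together with $m+n=x_3$, $p-q=x_2$; eliminating variables via the classical parametrisation of Pythagorean triples (or unique factorisation in $\Z[i]$) one reaches a Fermat-type quartic such as $r^4-r^2s^2+s^4=v^2$, and the solutions of the resulting system assemble into a new four-term arithmetic progression of squares whose top term is strictly smaller than $x_4$ — contradicting minimality, and proving Lemma~\ref{Fermat}.

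The main obstacle is not a deep ingredient but the bookkeeping of the descent: one must track the $2$-adic valuations and the (bounded) common factors of the various pairs so that the Pythagorean parametrisations legitimately apply, and one must verify that the progression produced is genuinely non-degenerate and strictly smaller rather than a rescaling of the original. An alternative packaging that isolates this crux is to carry the reduction only as far as a named classical equation — for instance $x^4-y^4=z^2$ or $r^4-r^2s^2+s^4=v^2$ — whose unsolvability in nonzero integers is itself a standard descent; this keeps the write-up short, and of course one could instead simply cite the classical references collected in \cite{Dickson:1966}.
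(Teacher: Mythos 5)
The paper does not prove this lemma at all: it is quoted as a classical theorem of Fermat and Euler with a pointer to \cite{Dickson:1966}, so the fallback you offer in your last sentence (``simply cite the classical references'') is precisely what the authors do. Your preparatory steps are correct and standard: the reduction to a primitive, pairwise coprime progression, the mod $8$ argument forcing all $x_i$ odd and $8\mid D$, the identities $x_2^4-(x_1x_3)^2=D^2$ and $x_3^4-(x_2x_4)^2=D^2$, and the translation into the coupled Pythagorean system $m^2+n^2=x_2^2$, $p^2+q^2=x_3^2$ with $mn=pq$, $m+n=x_3$, $p-q=x_2$ all check out.

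However, as a proof the proposal has a genuine gap exactly where the classical difficulty lies. The sentence ``eliminating variables via the classical parametrisation of Pythagorean triples \dots one reaches a Fermat-type quartic such as $r^4-r^2s^2+s^4=v^2$, and the solutions of the resulting system assemble into a new four-term arithmetic progression of squares whose top term is strictly smaller than $x_4$'' asserts the entire content of the descent without performing it. Substituting $\{m,n\}=\{r^2-s^2,2rs\}$ and $\{p,q\}=\{t^2-u^2,2tu\}$ into $mn=pq$, $m+n=x_3=t^2+u^2$ and $p-q=x_2=r^2+s^2$ produces a coupled Diophantine system whose resolution requires several pages of sign, parity and coprimality case analysis; this is where every written-out proof of the lemma spends its effort, and none of it appears here. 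Moreover, even granting that one lands on $r^4-r^2s^2+s^4=v^2$, you would still need (i) a proof (itself another descent) that this quartic has no integer solutions beyond $rs(r^2-s^2)=0$, and (ii) a verification that those trivial solutions correspond only to degenerate progressions with $D=0$, which your write-up does not address. As it stands the argument is a correct road map for the classical descent, not a proof; either carry out the elimination in full or, as the paper does, cite \cite{Dickson:1966} and omit the proof.
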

\begin{corollary}{\label{cor:fermatshift}}
Let $f(x)=ax^2+bx+c$ be a quadratic polynomial where $a,b,c \in \Z$
such that $a>0$, and let $S=\{f(x) : x \in \Z\}$.
Then the set $S$ does not contain
 four integer squares in arithmetic progression.
\end{corollary}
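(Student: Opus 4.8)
The plan is to reduce everything to Lemma~\ref{Fermat} by completing the square. The one computation I need is the identity
\[
  4a\, f(x) + (b^2 - 4ac) = (2ax+b)^2,
\]
valid for every $x \in \Z$, which says precisely that the affine substitution $\phi\colon u \mapsto 4a u + (b^2 - 4ac)$ maps the value set $S$ into the set of integer squares: if $s = f(x) \in S$, then $\phi(s) = (2ax+b)^2 \in S_2$. So the first step is just to record this identity and observe that $\phi$ is injective and affine with nonzero slope $4a$, hence carries arithmetic progressions to arithmetic progressions of the same length, scaling the common difference by $4a$.

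With this in hand the argument is immediate. Suppose, for a contradiction, that $S$ contains four integer squares in a nonconstant arithmetic progression $s_1, s_2, s_3, s_4$, say with common difference $h \neq 0$, and write $s_i = f(x_i)$ with $x_i \in \Z$. Applying $\phi$ I get
\[
  (2ax_i + b)^2 = \phi(s_i) \qquad (i = 1,2,3,4),
\]
and the images $\phi(s_1), \dots, \phi(s_4)$ again form an arithmetic progression, now of common difference $4ah \neq 0$. Thus the four squares $(2ax_1+b)^2, \dots, (2ax_4+b)^2$ constitute a nonconstant four-term arithmetic progression of integer squares, contradicting Lemma~\ref{Fermat}.

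There is really no obstacle beyond this bookkeeping; the only subtleties worth recording are that one should insist the progression be nonconstant (a constant one is vacuous, and in any case $f$ attains each value at most twice), and that the argument uses only $a \neq 0$, which is implied by the hypothesis $a>0$. I note in passing that the same computation yields slightly more — namely that $S$ contains no nonconstant four-term arithmetic progression whatsoever, not merely none consisting of squares — and it is this stronger form, together with Lemma~\ref{growth}, that one would invoke when transferring the proof of Theorem~\ref{thm:squares} to the polynomial setting of Corollary~\ref{cor:quadraticpoly}.
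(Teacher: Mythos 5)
Your proof is correct and follows essentially the same route as the paper: both complete the square via the identity $4af(x)+b^2-4ac=(2ax+b)^2$ and observe that the resulting affine map sends a four-term arithmetic progression in $S$ to a four-term arithmetic progression of integer squares, contradicting Lemma~\ref{Fermat}. Your side remarks (nonconstancy, and that the argument in fact rules out any nonconstant four-term progression in $S$) are accurate but add nothing beyond what the paper's one-line proof already contains.
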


Let us remark that 
Setzer \cite{Setzer:1979} proved this corollary using elliptic curves.
Here we show that it is a simple consequence of Lemma
\ref{Fermat}, and therefore allows for an elementary proof.
\begin{proof}[Proof of Corollary \ref{cor:fermatshift}]
From $4af(x)+b^2-4ac=(2ax+b)^2$,
it follows that if the arithmetic four-progression
$P=\{n,n+m, n+2m, n+3m\}$ is contained in $S$, 
then the dilated and translated four-progression
$\{4an+b^2-4ac, 4an+b^2-4ac+4am,4an+b^2-4ac+8am,4an+b^2-4ac+12am\}$
 will be in the set of squares, contradicting Lemma \ref{Fermat}.
\end{proof}
\begin{proof}[Proof of Theorem \ref{thm:squares}]
Let
\[
  C=H(a_0; a_1, \ldots , a_{\lfloor d/2\rfloor})
\]
and
\[
  D= H(0; a_{\lceil (d+1)/2\rceil}, \ldots , a_d).
\]
By Lemma \ref{Fermat},
as the sets $C$ and $D$ are a subset, or a shifted subset,
of the set of squares, $C$ and $D$ do not contain an arithmetic
 progression of length $k=4$. 
Thus, by Lemma \ref{growth} and Lemma \ref{kati}, applied with $k=4$ to both
$C$ and $D$ individually, we obtain
\[c^{\lfloor d/2\rfloor }-1\leq \min( |C|, |D|)\leq 8 \log N,\]
where $c=4/3$. Hence
\[ d \leq \frac{2}{\log c} \log \log N+O(1)
\leq  6.96\log \log N,\]
for sufficiently large $N$.
\end{proof}
\begin{proof}[Proof of Theorem \ref{thm:kprogression}]
The proof is as the one above, except that
we replace Lemma \ref{kati} by the following 
lemma, due to
 Croot, Ruzsa and Schoen (\cite{CrootandRuzsaandSchoen:2007}, Corollary 1):
\begin{lemma}{\label{lem:croot}}
If $C,D\subset[1,N]$ and $C+D$ does not contain an arithmetic progression
of length $k$, then  
\[
  \min( |C|, |D|)\leq \sqrt{6}N^{1-\frac{1}{k-1}}.
\]
\end{lemma}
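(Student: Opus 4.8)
The statement is Corollary~1 of Croot, Ruzsa and Schoen \cite{CrootandRuzsaandSchoen:2007}, so my plan is to reconstruct the idea behind their finite addition theorem rather than to offer a genuinely new argument. Since $\min(|C|,|D|)^2\le|C|\cdot|D|$, it suffices to prove the contrapositive in the form: \emph{if $|C|\cdot|D|>6N^{2-2/(k-1)}$ then $C+D$ contains a $k$-term arithmetic progression.} Two routine reductions come first: after translating we may assume $\min C=\min D=0$, so that $C,D\subseteq[0,N]$ and $C+D\subseteq[0,2N]$; and a $k$-term progression inside $C$ alone would already lie inside $C+d\subseteq C+D$ for any $d\in D$, and symmetrically for $D$, so that both $C$ and $D$ are themselves free of $k$-term progressions, whence Lemma~\ref{dense-progression} gives $|C\cap(C+h)|\le\frac{k-2}{k-1}|C|$ and $|D\cap(D+h)|\le\frac{k-2}{k-1}|D|$ for every $h\ge1$.

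The mechanism that does the work is that arithmetic progressions with a \emph{common} difference add: if $C$ contains an $\ell$-term progression of difference $h$ and $D$ contains an $r$-term progression of the same difference $h$, then $C+D$ contains an $(\ell+r-1)$-term progression of difference $h$. So it is enough to find a single difference $h$ along which the longest progressions in $C$ and in $D$ have lengths summing to at least $k+1$; equivalently, one must rule out the case in which, for every admissible $h$, the progression profiles of $C$ and $D$ at difference $h$ (that is, the lengths of the longest progressions of difference $h$ in each set) are jointly short. Lemma~\ref{dense-progression} converts ``profile length at difference $h$'' into a correlation statement, and feeding this into the density hypothesis is done by a multi-scale induction on $k$: at each of the $k-2$ stages one either reads off a longer progression in $C+D$, or is forced to pass to a sub-configuration in which the density of $C$ or of $D$ has dropped by a factor of order $N^{-1/(k-1)}$. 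After $k-2$ stages the slack in the exponent is exhausted, and keeping track of the constants through these stages yields the explicit factor $\sqrt6$. In the base case $k=3$ this can be phrased purely in terms of the difference sets $C+C-2C$ and $2D-D-D$ inside $[-2N,2N]$, a non-trivial common element of which produces a $3$-term progression in $C+D$.

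The main obstacle is precisely this multi-scale step, and it is easy to underestimate its subtlety. Every one-shot estimate one is tempted to try — averaging $|C\cap(C+h)|$ over the at most $N$ admissible differences, or using that $C+D$ must omit one of every $k$ consecutive terms in each residue class modulo a small $h$, or pigeonholing the additive energy $E(C,D)\ge(|C|\,|D|)^2/|C+D|$ against the $\approx 2N$ values of $h$ — yields a bound linear in $N$ and falls short of $N^{1-1/(k-1)}$ by a large margin. One genuinely has to exploit the additive structure of the sumset $C+D$, and exploit it $k-2$ separate times, with the losses at the successive stages arranged to telescope to exactly the exponent $2-2/(k-1)$; setting up that iteration and controlling the constant through it is the technical core of \cite{CrootandRuzsaandSchoen:2007}, which I would quote rather than reprove.
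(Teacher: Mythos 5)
The paper gives no proof of this lemma at all: it is imported verbatim as Corollary~1 of Croot, Ruzsa and Schoen \cite{CrootandRuzsaandSchoen:2007}, which is exactly what you do. Your additional sketch of the internal mechanism of their argument is extraneous to (and not relied upon by) anything in this paper, so quoting the reference, as both you and the authors ultimately do, is the whole of the intended justification here.
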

It then follows that
\[ d \leq \frac{2}{\log c} \frac{k-2}{k-1}\log N+\frac{2}{\log c}+1,\]
for sufficiently large $N$.
With $c=\frac{k}{k-1}$ the theorem follows.
\end{proof}
\begin{remark}
There are a number of results in the same spirit as Lemma \ref{lem:croot}.
Results of Green \cite{Green:2002}, Croot, \L aba and Sisask 
\cite{CrootandLabaandSisask:2013}
and Henriot \cite{Henriot:2013} lead to slightly stronger estimates, when $k$ is large.
However, it appears, in our application to
Theorem \ref{thm:kprogression} this would not even improve the ``2'' in the
the factor $2+o_k(1)$ so that we did not pursue this path.
\end{remark}

\begin{proof}[Proof of Corollary \ref{cor:quadraticpoly}]
From $4af(x)=(2ax+b)^2-b^2+4ac$,
it follows that if
\[
  H(a_0; a_1, \ldots, a_d) \subset S \cap [1,N],
\]
then
\begin{align*}
  4a H(a_0; a_1, \ldots, a_d) + b^2-4ac & \subset S_2 \cap
  [4a+b^2-4ac, 4aN+b^2-4ac] \\
  & \subset S_2 \cap [1, 4aN+b^2-4ac].
\end{align*}
Moreover,
\[
  4a H(a_0; a_1, \ldots, a_d) + b^2-4ac 
  = H(4a a_0+b^2-4ac; 4a a_1, \ldots, 4a a_d).
\]
The Corollary now follows immediately from the proof of Theorem 
\ref{thm:squares}, and observing that 
\[d < 6.96\log \log (4aN+b^2-4ac) + O(1) \leq 7 \log \log N\]
for sufficiently large $N$.
\end{proof}

\section{Hilbert cubes in powerful numbers, 
Proof of Theorem \ref{thm:Hilbert-powerful}
and Corollary  \ref{cor:Hilbert-pure}}

We first give an upper bound
on the maximal length of an arithmetic progression in the 
set $W$ of powerful numbers.
Let 
\[b,b+d, b+2d, \ldots, b+(k-1)d \in W\cap[1,N].\]

Let $p$ be a prime with $(d,p)=1$. Observe that
$k \geq 2p$ would imply that the progression contains at least two elements
which are divisible by $p$, and at least one of these is not divisible 
by $p^2$.
Hence, if $k \geq 2p$, then $p$ divides $d$.
From this it follows that for a  progression of length $k$
the difference $d$ is divisible by all primes $p \leq k/2$.
As $b+(k-1)d\leq N$ it follows that 
\[\prod_{p \leq k/2} p \leq d \leq \frac{N}{k-1}.\]
Taking logarithms it follows by the prime number theorem (with logarithmic
weight)
that
\[ \sum_{p \leq k/2} \log p =\frac{k}{2}+o(k) \leq \log d \leq \log N.\]
Hence $k \leq (2+o(1))\log N$.

Then making use of Theorem \ref{thm:kprogression}
with $k=(2+o(1))\log N$ proves that
\[
  d \le (4+o(1)) (\log N)^2
\]
for sufficiently large $N$, which confirms Theorem
\ref{thm:Hilbert-powerful}. Since $V \subset W$, Corollary
\ref{cor:Hilbert-pure} then follows immediately.

\section{Proof of Theorem \ref{brunnen}}

\subsection{Preparations}

\begin{lemma}[Erd\H{o}s and Shapiro, p.~862 of \cite{ES}]
\label{pacific}
Let $p$ be a prime and $\chi$ be a multiplicative character modulo $p$
that is not the main character. Moreover, let
$\overline{A}$ and $\overline{B}$ be subsets of $\Z/p\Z$. Then
\[
  \left| \sum_{a \in \overline{A}} \sum_{b \in \overline{B}}
  \chi(a+b) \right| \le (p |\overline{A}| |\overline{B}|)^{1/2}.
\]
\end{lemma}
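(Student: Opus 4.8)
The plan is to obtain the bound by a Cauchy--Schwarz argument that converts the double sum into a \emph{complete} character sum over $\Z/p\Z$, which can then be evaluated exactly by multiplicativity and orthogonality. Write $S$ for the sum in question. First I would smooth the summation over $b$: by Cauchy--Schwarz in the variable $b$, and then enlarging the range of $b$ from $\overline{B}$ to all of $\Z/p\Z$ (legitimate since the terms are non-negative),
\[
  |S|^2 \le |\overline{B}| \sum_{b \in \Z/p\Z} \Bigl| \sum_{a \in \overline{A}} \chi(a+b) \Bigr|^2 .
\]
The purpose of this step is that the $b$-sum is now over the whole group, so the multiplicative structure of $\chi$ becomes usable.

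Next I would expand the square into a double sum over $a_1, a_2 \in \overline{A}$, interchange the order of summation to bring the $b$-sum inside, and separate the diagonal $a_1 = a_2$ from the off-diagonal. On the diagonal one gets $\sum_{b \in \Z/p\Z} |\chi(a_1+b)|^2 = p-1$, since $|\chi|$ is $1$ on the $p-1$ nonzero residues and $0$ at $b = -a_1$. For the off-diagonal, with $a_1 \ne a_2$, I would rewrite the summand for $b \ne -a_2$ as $\chi\bigl((a_1+b)(a_2+b)^{-1}\bigr) = \chi\bigl(1 + (a_1-a_2)(a_2+b)^{-1}\bigr)$; as $b$ runs over $\Z/p\Z \setminus \{-a_2\}$ the argument runs over $\Z/p\Z \setminus \{1\}$, so the inner sum equals $\sum_{t \ne 1} \chi(t) = -\chi(1) = -1$ by orthogonality, the excluded $b = -a_2$ contributing nothing because $\chi(0)=0$. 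Hence the inner double sum equals $|\overline{A}|(p-1) - \bigl(|\overline{A}|^2 - |\overline{A}|\bigr) = |\overline{A}|\,(p - |\overline{A}|) \le p\,|\overline{A}|$, and combining with the previous display yields $|S|^2 \le p\,|\overline{A}|\,|\overline{B}|$, which is the claim.

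I do not expect a genuine obstacle here: the only point requiring care (rather than ingenuity) is the bookkeeping around the residues $b = -a_1$ and $b = -a_2$ at which $\chi$ vanishes — on the diagonal this is exactly what turns ``$p$'' into ``$p-1$'', and on the off-diagonal the value $b = -a_2$ is simply discarded, consistently with the substitution used. No deep input such as the Weil bound is needed; this is the standard elementary ``multiplicative large sieve'' manipulation, and the whole argument is symmetric in $\overline{A}$ and $\overline{B}$, so one could equally well apply Cauchy--Schwarz in the variable $a$.
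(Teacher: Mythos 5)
Your proof is correct: the Cauchy--Schwarz step, the completion of the $b$-sum, and the exact evaluation of the diagonal ($p-1$) and off-diagonal ($-1$, via the substitution $t=1+(a_1-a_2)(a_2+b)^{-1}$ and orthogonality) are all handled properly, giving $|S|^2 \le |\overline{B}|\cdot|\overline{A}|(p-|\overline{A}|) \le p|\overline{A}||\overline{B}|$. The paper does not prove this lemma but quotes it from Erd\H{o}s--Shapiro, and your argument is essentially the classical one given there, so there is nothing further to compare.
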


\begin{lemma}[Gallagher's larger sieve, 
\cite{Gallagher:1971}]{\label{Gallagher}}
\label{gallagher}
Let ${\mathcal{S}}$ denote a set of primes 
and ${\mathcal{A}}\subset[1,N]$ such that for
$p \in {\mathcal{S}}$ the set ${\mathcal{A}}$ lies
modulo $p$ in at most $\nu(p)$
residue classes. Then the following bound holds,
provided the denominator is positive:
\[ |{\mathcal A}| \leq \dfrac{ - \log N + \sum_{p \in {\mathcal{S}}} \log p}{- \log N
+ \sum_{p \in {\mathcal{S}}} \dfrac{ \log p}{\nu(p)} }.
\]
\end{lemma}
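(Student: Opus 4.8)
The plan is to prove the inequality by a second-moment (Cauchy--Schwarz) double count of the pairs of elements of $\mathcal{A}$ that are congruent modulo the primes of $\mathcal{S}$, weighted by $\log p$. Write $A=|\mathcal{A}|$ and, for $p\in\mathcal{S}$ and a residue $r$, let $Z(p,r)$ denote the number of elements of $\mathcal{A}$ lying in the class $r$ modulo $p$. The central quantity is
\[
   U=\sum_{p\in\mathcal{S}}\log p\sum_{r\bmod p}Z(p,r)^2
   =\sum_{p\in\mathcal{S}}\log p\;\#\{(a,a')\in\mathcal{A}^2:\ a\equiv a'\ (\mathrm{mod}\ p)\},
\]
the second equality being the elementary fact that $\sum_r Z(p,r)^2$ counts the ordered congruent pairs. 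I would then estimate $U$ from below and from above and compare.

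For the lower bound, fix $p$. Since $\mathcal{A}$ meets at most $\nu(p)$ residue classes modulo $p$, at most $\nu(p)$ of the numbers $Z(p,r)$ are nonzero, while $\sum_r Z(p,r)=A$. Cauchy--Schwarz therefore gives $\sum_r Z(p,r)^2\ge A^2/\nu(p)$, and summing over $p$ yields
\[
   U\ge A^2\sum_{p\in\mathcal{S}}\frac{\log p}{\nu(p)}.
\]
For the upper bound I would switch the order of summation, writing $U=\sum_{(a,a')}\sum_{p\in\mathcal{S},\,p\mid a-a'}\log p$. Each of the $A$ diagonal pairs $a=a'$ contributes $\sum_{p\in\mathcal{S}}\log p$. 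For an off-diagonal pair one has $0<|a-a'|<N$, so the squarefree product $\prod_{p\mid a-a'}p\le|a-a'|<N$, whence $\sum_{p\mid a-a'}\log p<\log N$; as there are $A^2-A$ such pairs, this gives
\[
   U\le A\sum_{p\in\mathcal{S}}\log p+(A^2-A)\log N.
\]

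Combining the two bounds produces
\[
   A^2\sum_{p\in\mathcal{S}}\frac{\log p}{\nu(p)}\le A\sum_{p\in\mathcal{S}}\log p+(A^2-A)\log N.
\]
The pleasant point is that the $A^2\log N$ term on the right moves to the left, after which both sides carry a common factor $A$; dividing by $A$ and isolating it gives
\[
   A\Bigl(\sum_{p\in\mathcal{S}}\frac{\log p}{\nu(p)}-\log N\Bigr)\le\sum_{p\in\mathcal{S}}\log p-\log N.
\]
Under the stated hypothesis that the denominator $-\log N+\sum_{p}\log p/\nu(p)$ is positive, I may divide by it without reversing the inequality, which is exactly the claimed bound.

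There is no genuinely hard step here; the argument is a clean instance of the larger-sieve philosophy, and I expect the only delicate points to be bookkeeping. The first is the elementary estimate $\sum_{p\mid n}\log p\le\log|n|$ for the off-diagonal pairs, valid because a squarefree divisor of $n$ cannot exceed $|n|$. The second is the treatment of the diagonal term: its presence is precisely what makes the factor $A$ cancel so cleanly and what pins down the sign condition on the denominator. Finally, the hypothesis $\mathcal{A}\subset[1,N]$ is used only through the strict inequality $|a-a'|<N$, and it is worth confirming that strictness so that the concluding division is licit.
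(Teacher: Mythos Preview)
Your argument is correct and is precisely the classical proof of Gallagher's larger sieve: the Cauchy--Schwarz lower bound on $\sum_r Z(p,r)^2$, the swap of summation to bound the off-diagonal pairs via $\sum_{p\mid n}\log p\le \log|n|$, and the resulting linear inequality in $A$. Note, however, that the paper does not give its own proof of this lemma; it is simply quoted with a reference to Gallagher's original paper, so there is nothing to compare against beyond observing that your write-up reproduces the standard argument found there.
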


\begin{lemma}[Quantitative version of Linnik's theorem, see Corollary 18.8 
in \cite{IK}]{\label{Linnik}}
There exists a positive constant $L$ such that
if $N\geq Q^L$, then
\[
  \sum_{\substack{p\leq N\\ p \equiv 1 \bmod Q}} \log p \gg
  \frac{N}{\varphi(Q)\sqrt{Q}},
\]
where the implied constant is absolute.
\end{lemma}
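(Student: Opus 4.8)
The plan is to recover this from the analytic theory of Dirichlet $L$-functions, which is exactly the content packaged as Corollary 18.8 in Iwaniec--Kowalski; the heart of the matter is Linnik's circle of ideas. First I would pass from the prime-counting sum to the Chebyshev-type weighted sum
\[
  \psi(N; Q, 1) = \sum_{\substack{n \le N \\ n \equiv 1 \bmod Q}} \Lambda(n),
\]
noting that the contribution of proper prime powers is $O(\sqrt{N}\log N)$, which is negligible against the target $N/(\varphi(Q)\sqrt{Q})$ once $N \ge Q^{L}$ with $L \ge 3$, say. So it suffices to produce the lower bound $\psi(N; Q, 1) \gg N/(\varphi(Q)\sqrt{Q})$. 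Next I would expand via orthogonality of characters,
\[
  \psi(N; Q, 1) = \frac{1}{\varphi(Q)} \sum_{\chi \bmod Q} \overline{\chi}(1)\, \psi(N, \chi),
  \qquad \psi(N, \chi) = \sum_{n \le N} \Lambda(n)\chi(n),
\]
and isolate the principal character: by the prime number theorem $\psi(N, \chi_0) = N + O(N e^{-c\sqrt{\log N}})$, which furnishes the main term $N/\varphi(Q)$. The whole problem is then to show that the non-principal characters do not erode this main term by more than a factor $\sqrt{Q}$.

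For the non-principal $\chi$ I would invoke the explicit formula $\psi(N,\chi) = -\sum_{|\gamma| \le T} N^{\rho}/\rho + O\bigl(N(\log QN)^{2}/T\bigr)$, summed over the non-trivial zeros $\rho = \beta + i\gamma$, and then control the total over all $\chi$ and all zeros with $\beta$ bounded away from $1$ by means of Linnik's \emph{log-free} zero-density estimate $\sum_{\chi} N(\sigma, T, \chi) \ll (QT)^{A(1-\sigma)}$. This is the step where one must avoid a spurious $\varphi(Q)$ loss: a crude zero-free-region bound applied character-by-character would only succeed for $N \ge Q^{c\log Q}$, whereas the log-free density estimate is precisely what forces an absolute exponent $L$. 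With it, the combined contribution of all zeros with $1 - \beta \ge \delta$, for a suitable $\delta \asymp 1/\log Q$, is $o(N/\varphi(Q))$ as soon as $N \ge Q^{L}$.

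The main obstacle, and the source of the $\sqrt{Q}$ in the statement, is the possible Siegel (exceptional) real zero $\beta_{1}$ of a real character $\chi_{1} \bmod Q$ -- the one zero the density estimate cannot repel from $s=1$. It contributes $-N^{\beta_{1}}/(\beta_{1}\varphi(Q))$ to $\psi(N;Q,1)$, so I would bound
\[
  \psi(N; Q, 1) \ge \frac{N}{\varphi(Q)}\bigl(1 - N^{\beta_{1} - 1}\bigr) - o\!\left(\frac{N}{\varphi(Q)}\right).
\]
Rather than appeal to Siegel's ineffective theorem (which would yield the stronger $N/\varphi(Q)$ but with an ineffective $L$), I would use only the effective class-number lower bound $L(1,\chi_{1}) \gg Q^{-1/2}$, together with $|L'(\sigma,\chi_{1})| \ll (\log Q)^{2}$ near $\sigma = 1$, to extract the effective repulsion $1 - \beta_{1} \gg Q^{-1/2}(\log Q)^{-2}$. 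Since $N \ge Q^{L}$ gives $\log N \ge L\log Q$, this makes $1 - N^{\beta_{1}-1}$ of size $Q^{-1/2}$, which is exactly the claimed loss. In this exceptional case the Deuring--Heilbronn phenomenon guarantees that the presence of $\beta_{1}$ pushes all remaining zeros so far from $s=1$ that the error term above is genuinely negligible. I expect the bookkeeping of the logarithmic powers to be the fussiest point: the factor $\sqrt{Q}$ is precisely the price paid for an \emph{effective}, rather than ineffective, treatment of the exceptional zero, while everything else is the standard Linnik machinery assembled in Chapter 18 of Iwaniec--Kowalski.
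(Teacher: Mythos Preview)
The paper does not supply a proof of this lemma at all: it is quoted as Corollary~18.8 of Iwaniec--Kowalski and used as a black box. So there is no ``paper's own proof'' to compare against; your sketch is essentially an outline of the argument in Chapter~18 of that reference, and its architecture---explicit formula, log-free zero-density estimate, effective lower bound for $1-\beta_1$ coming from the class-number bound $L(1,\chi_1)\gg Q^{-1/2}$, and Deuring--Heilbronn to suppress the remaining zeros in the exceptional case---is correct and is indeed the standard route.

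One minor point worth flagging: with the repulsion $1-\beta_1 \gg Q^{-1/2}(\log Q)^{-2}$ and $\log N \ge L\log Q$, you get $(1-\beta_1)\log N \gg L\,Q^{-1/2}(\log Q)^{-1}$, hence $1 - N^{\beta_1-1} \gg Q^{-1/2}(\log Q)^{-1}$ rather than $Q^{-1/2}$ on the nose. In practice this stray logarithm is harmless---one can absorb it into the implied constant after choosing $L$ large enough, or sharpen the effective bound on $1-\beta_1$ slightly---but since you yourself anticipate that ``the bookkeeping of the logarithmic powers'' is the fussiest point, this is exactly where that fuss lives.
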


\subsection{A cube lemma for homogeneous arithmetic progressions}
In this subsection we briefly recall some definitions from \cite{DietmannandElsholtz:2012}
and then collect some results analogous to Theorem
\ref{thm:kprogression},
but more suitable for a setting where only information about homogeneous
arithmetic progressions is available, thus confining the applications to
subset sums rather than to general Hilbert cubes. Let us first remark
that by a \textit{homogeneous arithmetic progression of length $v$}
we mean a set
$\{d, 2d, \ldots, vd\}$ for suitable $d, v \in \N$.
In the following, let $S$ be any set of integers; for our intended
application, $S$ will later be specialised to the set $V$ of pure
powers.
Moreover, let $A=\{a_1, \ldots, a_d\}$ be a set of distinct positive
integers. We define $f(N)$ to be the minimum upper bound
such that whenever $B_1, \ldots, B_5 \subset \N$ such that
\begin{equation}
\label{november}
  B_i+B_j \subset S \cap [1, N]
\end{equation}
for all distinct $i, j \in \{1, \ldots, 5\}$, then
\[
  \min_{1 \le i \le 5} |B_i| \le f(N).
\]
Further, let $r_{A, h}(n)$ be the number of ways of writing $n$ as a sum
of exactly $h$
elements in $A$, where the order of the summands is irrelevant, and let
\[
  g(h, N)=\max_{n \le N} r_{A, h}(n).
\]
It turns out that $g(h, N)$ can be uniformly bounded in terms of the length of 
the longest
homogeneous arithmetic progression in $S$; note that this bound 
is independent of $A$.
\begin{lemma}
\label{progressionfreelemma}
Let $A=\{a_1, \ldots , a_d\} \subset [1,N]$ be a set of distinct integers,
and let 
$H(0;a_1, \ldots, a_d) \subset S \cap [1,N]$,
where $S$ is a set of positive integers
without a homogeneous arithmetic progression of length $v\geq 3$.
Then
\[
  g(h,N) \le h! (v-1)^{h-1}.
\]
\end{lemma}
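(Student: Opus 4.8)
The plan is to bound $r_{A,h}(n)$ by induction on $h$, extracting at each stage a homogeneous arithmetic progression whose length is controlled by $v$. The base case $h=1$ is trivial: since $A$ is a set of \emph{distinct} integers, each $n$ has at most one representation as a single element of $A$, so $g(1,N)\le 1 = 1!\,(v-1)^0$. For the inductive step, fix $n\le N$ and consider all the ways $n = a_{i_1}+\cdots+a_{i_h}$ of writing $n$ as an unordered sum of $h$ elements of $A$. Group these representations according to their smallest summand $a_i$. If $a_i$ is the smallest summand occurring in some representation, then $n - a_i$ has a representation as a sum of $h-1$ elements of $A$, so by the inductive hypothesis the number of representations of $n$ with a \emph{given} smallest summand $a_i$ is at most $g(h-1,N)\le (h-1)!\,(v-1)^{h-2}$. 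It therefore suffices to show that the number of distinct values $a_i$ that can arise as the smallest summand is at most $h(v-1)$, since then $g(h,N)\le h(v-1)\cdot (h-1)!\,(v-1)^{h-2} = h!\,(v-1)^{h-1}$.

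The heart of the argument is thus the following claim: the set $T$ of possible smallest summands has $|T|\le h(v-1)$. Suppose instead $|T| > h(v-1)$, i.e.\ $|T|\ge h(v-1)+1$. List the elements of $T$ as $a^{(1)} < a^{(2)} < \cdots$; I want to produce a long homogeneous arithmetic progression inside $S$. The idea is that for each $a^{(j)}\in T$ there is a representation $n = a^{(j)} + (\text{sum of } h-1 \text{ other elements of } A)$, and the multiset of those $h-1$ other elements has bounded "spread"; by pigeonhole, among $h(v-1)+1$ choices of smallest summand, many must reuse the same multiset of larger summands — more precisely, since the $h-1$ larger summands are chosen from $A$ and each lies in $(a^{(j)},\infty)$, a counting argument over how the "tail multiset" can change as $a^{(j)}$ increases forces $v$ values of the smallest summand, say $a^{(j_1)} < \cdots < a^{(j_v)}$, to differ from one another in a single coordinate of the tail multiset only. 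Concretely, if two representations $n = a + (\text{tail})$ and $n = a' + (\text{tail}')$ with $a<a'$ differ only in that one element $b$ of the first tail is replaced by $b'$ in the second, then $a + b = a' + b'$, and iterating along a chain of $v$ such representations yields elements $b^{(1)} > b^{(2)} > \cdots > b^{(v)}$ of $A$ with $b^{(r)} - b^{(r+1)} = a^{(j_{r+1})} - a^{(j_r)} =: \delta$ constant; that is, $\{b^{(v)}, b^{(v)}+\delta, \ldots, b^{(v)}+(v-1)\delta\}$ is a homogeneous (up to translation — and since all the $b^{(r)}$ together with the common remaining part of the tail sum to elements of $H(0;a_1,\dots,a_d)\subset S$, one gets an honest homogeneous progression $\{d,2d,\dots,vd\}$ in $S$ after clearing the common part) arithmetic progression of length $v$ in $S$, contradicting the hypothesis.

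The step I expect to be the main obstacle is making the pigeonhole bookkeeping in the previous paragraph precise: one must show that among more than $h(v-1)$ admissible smallest summands, one can always find a chain of length $v$ along which the "tail" multiset changes in a single controlled coordinate, and then verify that the resulting progression really embeds into $S$ rather than merely into a translate of it. The constant $h(v-1)$ reflects that the tail has $h-1$ slots and each slot, moving monotonically, can take at most $v-1$ distinct values before forcing a homogeneous progression of length $v$; getting this counting exactly right — and in particular handling the bookkeeping of repeated elements in the multiset, which is why the statement allows the $a_i$ to repeat even though $A$ is a set — is the delicate part. Everything else is routine induction and an application of the definition of $v$ as the progression-free threshold for $S$.
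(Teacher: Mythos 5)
Your overall induction scheme and the target recursion $g(h,N)\le h(v-1)\,g(h-1,N)$ match the intended proof, but the step you yourself flag as delicate is a genuine gap, and the mechanism you propose for it does not work. First, there is no reason why two representations of $n$ with different smallest summands should have tails differing in a single coordinate, nor why more than $h(v-1)$ distinct smallest summands should force a chain of $v$ such single-coordinate changes with a \emph{constant} increment $\delta$ (the latter would additionally require the chosen smallest summands themselves to lie in arithmetic progression, which nothing guarantees). Second, and fatally for this particular lemma, even granting such a chain, the elements of $S$ you would produce are of the form $b^{(r)}$ plus the sum of the common part of the tails; these form an ordinary arithmetic progression, not a homogeneous one $\{\delta,2\delta,\ldots,v\delta\}$, and ``clearing the common part'' takes you outside $S$. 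Since the hypothesis only excludes \emph{homogeneous} progressions, no contradiction follows. (Your closing remark about repeated $a_i$ is also off the mark here: in this lemma $A$ is a set of distinct integers and the representations are subsets, so no multiset bookkeeping arises; multiplicities only matter elsewhere in the paper.)

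The idea you are missing is to look at \emph{pairwise disjoint} representations rather than at smallest summands. Let $\mathcal{F}$ be the family of $h$-element subsets of $A$ summing to $n$, and let $F_1,\ldots,F_t$ be a maximal pairwise disjoint subfamily. If $t\ge v$, then the sums of $F_1$, $F_1\cup F_2$, \ldots, $F_1\cup\cdots\cup F_v$ equal $n,2n,\ldots,vn$ and all lie in $H(0;a_1,\ldots,a_d)\subset S$, because each union is again a subset of $A$; this is exactly a homogeneous progression of length $v$, a contradiction. (This is where $a_0=0$ enters, and why the bound here has $v-1$ where the inhomogeneous version in \cite{DietmannandElsholtz:2012} has $v-2$: there one also gets the term $a_0$ itself, i.e.\ a progression of length $v$ already from $v-1$ disjoint sets.) Hence $t\le v-1$, the union $U=F_1\cup\cdots\cup F_t$ has at most $h(v-1)$ elements, and by maximality every $F\in\mathcal{F}$ meets $U$. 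For each $u\in U$ the number of $F\in\mathcal{F}$ containing $u$ is at most $r_{A,h-1}(n-u)\le g(h-1,N)$, so $r_{A,h}(n)\le h(v-1)\,g(h-1,N)$, and your induction then closes exactly as you set it up.
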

The proof of this lemma is exactly the same as for Lemma 5 in
\cite{DietmannandElsholtz:2012}, taking $a_0=0$ throughout.
Note that with $a_0=0$ all arithmetic progressions
occurring in that proof turn out to be homogeneous,
and note that we need to replace $v-2$ in Lemma 5 in
\cite{DietmannandElsholtz:2012} by $v-1$ here, as with $a_0=0$ the
progressions of the form $0,d,2d,\ldots,(v-1)d$ of length $v$ occurring
there would reduce to a homogeneous progression $d,2d,\ldots,(v-1)d$ of
length $v-1$ here.\\
Our second main tool is the following result.
\begin{lemma}
\label{vancouver}
Let $A=\{a_1, \ldots , a_d\}\subset [1,N]$ be a set of distinct integers,
and suppose that $H(0; a_1, \ldots, a_d)
\subset S \cap [1,N]$. If $d \ge 5h+4$ for any $h \in \N$, then
\[
  d \le 5(h! f(N) g(h, N))^{1/h} + 5h + 4.
\]
\end{lemma}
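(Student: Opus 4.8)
The plan is to set up a double counting argument that relates the size of $A$ to the quantities $f(N)$ and $g(h,N)$, exploiting the hypothesis $H(0;a_1,\dots,a_d)\subset S\cap[1,N]$ together with the defining property of $f(N)$: whenever five sets pairwise sum into $S\cap[1,N]$, one of them has size at most $f(N)$. First I would partition the index set $\{1,\dots,d\}$ into five blocks $I_1,\dots,I_5$ of nearly equal size, say each of size between $\lfloor d/5\rfloor$ and $\lceil d/5\rceil$; since $d\ge 5h+4$, each block has size at least $h$. Inside block $I_j$, consider the set $B_j$ of all subset sums using exactly $h$ of the indices in $I_j$, i.e. $B_j=\{\sum_{i\in T}a_i : T\subset I_j,\ |T|=h\}$. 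For distinct $j,j'$, a sum $b_j+b_{j'}$ with $b_j\in B_j$, $b_{j'}\in B_{j'}$ is a subset sum of $a_1,\dots,a_d$ using $2h$ distinct indices, hence lies in $H(0;a_1,\dots,a_d)\subset S\cap[1,N]$. Thus the five sets $B_1,\dots,B_5$ satisfy \eqref{november}, so $\min_j|B_j|\le f(N)$ by the definition of $f$.

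Next I would convert this bound on $|B_j|$ into a bound on $|I_j|$. The number of $h$-element subsets $T\subset I_j$ is $\binom{|I_j|}{h}$, and each of these maps to an element of $B_j$; by definition of $g(h,N)$, each element of $B_j$ is the image of at most $g(h,N)$ such subsets — here I use that $B_j\subset[1,N]$ (each such sum is a subset sum of elements of $H$, hence at most $\max H\le N$) and that $g(h,N)=\max_{n\le N}r_{A,h}(n)$ counts representations as sums of exactly $h$ elements of $A$, which is an upper bound for the number of $h$-subsets of any fixed block summing to a given value. Therefore
\[
  \binom{|I_j|}{h}\le g(h,N)\,|B_j|.
\]
Applying this to the block $j_0$ achieving $|B_{j_0}|\le f(N)$ gives $\binom{|I_{j_0}|}{h}\le f(N)g(h,N)$. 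Using the elementary inequality $\binom{m}{h}\ge (m/h)^h\ge m^h/h!$ (valid for $m\ge h$), we get $|I_{j_0}|^h\le h!\,f(N)g(h,N)$, i.e. $|I_{j_0}|\le (h!\,f(N)g(h,N))^{1/h}$. Finally, since the blocks were chosen of nearly equal size, $d\le 5|I_{j_0}|+4$ (the "$+4$" absorbing the discrepancy when $d$ is not divisible by $5$, as one block can be smaller than the others by at most one, so $d\le 5(|I_{j_0}|+1)-1 = 5|I_{j_0}|+4$), which yields
\[
  d\le 5\bigl(h!\,f(N)g(h,N)\bigr)^{1/h}+5h+4,
\]
wait — I should be careful: the "$+5h$" term does not arise from the block argument directly but presumably from a slightly more careful version of the partition (perhaps one first removes a small set of indices, or the blocks are taken of size $h$ plus a remainder). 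I would reconcile the exact constants by choosing the block decomposition so that the "error" block has size $O(h)$; the precise bookkeeping is routine once the mechanism is fixed.

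The main obstacle I anticipate is the combinatorial bookkeeping in the last step: getting the clean form $d\le 5|I_{j_0}|+5h+4$ rather than merely $d\le 5|I_{j_0}|+O(1)$, and making sure that each $B_j$ genuinely has at least $\binom{h+\text{something}}{h}$ elements so the inequality $\binom{|I_j|}{h}\ge |I_j|^h/h!$ is being applied in the right regime (this is why the hypothesis $d\ge 5h+4$ is imposed — it guarantees every block has size $\ge h$ and the binomial coefficient is meaningful). A secondary technical point is the justification that elements of $B_j$ lie in $[1,N]$ and that $r_{A,h}$ (counting representations from all of $A$, with multiplicity) dominates the count of $h$-subsets from a single block — this is immediate since every $h$-subset of $I_j$ is in particular an $h$-subset of $\{1,\dots,d\}$, but it is worth stating explicitly. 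No deep input is needed beyond the definitions; the lemma is essentially a packaging of the "five sets pairwise summing into $S$" structure into a usable bound.
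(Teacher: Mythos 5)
Your proposal follows exactly the paper's route: split $A$ into five nearly equal blocks, take $B_j$ to be the set of sums of $h$ distinct elements from block $j$, observe that the pairwise sums $B_j+B_{j'}$ land in $H(0;a_1,\ldots,a_d)\subset S\cap[1,N]$ so that $\min_j|B_j|\le f(N)$, and lower-bound $|B_j|$ by $\binom{|I_j|}{h}/g(h,N)$. That is precisely the argument in the paper (which phrases it as $|B_i|\ge (|A_i|-h)^h/(h!\,g(h,N))$, imported from step 3 of the proof of Theorem 4 in \cite{DietmannandElsholtz:2012}).

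The one genuine error is the elementary inequality $\binom{m}{h}\ge (m/h)^h\ge m^h/h!$: the second step is backwards, since $h!\le h^h$ gives $(m/h)^h\le m^h/h!$, and in fact $\binom{m}{h}=m(m-1)\cdots(m-h+1)/h!\le m^h/h!$, so the bound $\binom{m}{h}\ge m^h/h!$ is false for $h\ge 2$. The correct lower bound is $\binom{m}{h}\ge (m-h+1)^h/h!$, which yields $|I_{j_0}|\le (h!\,f(N)g(h,N))^{1/h}+h-1$ rather than $|I_{j_0}|\le (h!\,f(N)g(h,N))^{1/h}$. This is exactly where the $+5h$ in the statement comes from, and it resolves the puzzlement you express at the end: no modification of the partition is needed, the term $5h$ simply absorbs the loss $m\mapsto m-h$ in the falling factorial. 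With that correction, $d\le 5|I_{j_0}|+4\le 5(h!\,f(N)g(h,N))^{1/h}+5h-1\le 5(h!\,f(N)g(h,N))^{1/h}+5h+4$, and your hypothesis $d\ge 5h+4$ guarantees $|I_j|\ge h$ so the count is nonvacuous. Everything else in your write-up (the inclusion $B_j\subset[1,N]$, the domination of the block representation count by $r_{A,h}$, and hence by $g(h,N)$) is correct as stated.
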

\begin{proof}
This is basically the special case $m=1$ and $t=5$ of Theorem 4
in \cite{DietmannandElsholtz:2012}, but since our function
$f(N)$ is defined slightly differently from $f(5, N)$ there, let
us briefly outline the argument. Writing
$h \hat{\ } S$ for the $h$-fold
disjoint sumset of a set $S$, and letting $B_i= h \hat{\ } A_i$,
where $A = A_1 \cup \ldots \cup A_5 \cup R$ for mutually disjoint sets
$A_1, \ldots, A_5, R$ with $|A_1|=|A_2|=|A_3|=|A_4|=|A_5|
=\lfloor \frac{d}{5}\rfloor$
and $|R| \le 4$, we
first note that as in step 3 in the proof of Theorem 4 in
\cite{DietmannandElsholtz:2012}, we have
\begin{equation}
\label{ineq1}
  |B_i| \ge \frac{(|A_i|-h)^h}{h! g(h, N)}.
\end{equation}
Now by assumption $H(0; a_1, \ldots, a_d) \subset S \cap [1, N]$, whence
\eqref{november} is satisfied, so at least one $B_i$ satisfies the bound
\begin{equation}
\label{ineq2}
  |B_i| \le f(N).
\end{equation}
Rearranging \eqref{ineq1} and \eqref{ineq2} and noting that
$|A|=d$ then yields the result.
\end{proof}

\subsection{Sumsets in pure powers}

The following is close in spirit to Theorem 2 of 
Gyarmati, S\'{a}rk\"{o}zy and Stewart
\cite{GSS:2003}.
They
proved an upper bound on $|A|$, assuming that $A+'A\subset V$
where $A+'A = \{a + a': a, a' \in A, a \ne a'\}$.
Here we need a lemma for sums of distinct sets.
The following result about sums of five sets suffices for our purposes.
\begin{proposition}
\label{prop:tripleset}
There exists a positive constant $M$ with the following property:
If $A_1, \ldots, A_5 \subset [1,N]$ and $V$ denotes the set of pure powers,
and if
\[
  A_i + A_j \subset V
\]
for all distinct $i, j \in \{1, \ldots, 5\}$, then 
\[
   \min_{1 \le i \le 5} |A_i| \ll (\log N)^M.
\]
\end{proposition}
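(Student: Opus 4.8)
The plan is to combine a local analysis of pure powers modulo primes — via the character sum estimate of Lemma~\ref{pacific} — with Gallagher's larger sieve (Lemma~\ref{gallagher}), using Linnik's theorem (Lemma~\ref{Linnik}) to manufacture enough usable primes, and using the five sets only at the very end, in a pigeonhole step that pins the final bound on a single $A_i$. After passing to subsets we may assume $|A_1|=\cdots=|A_5|=D$ and must show $D\ll(\log N)^M$. We shall use the elementary facts that any pure power $a+b\le 2N$ equals $w^k$ with $w\ge2$ and $k$ a prime with $k\le\log_2(2N)$, that for each prime $k\ge3$ there are at most $(2N)^{1/k}$ perfect $k$-th powers in $[1,2N]$, and hence that, for a parameter $y$, the number of pure powers in $[1,2N]$ all of whose prime exponents exceed $y$ is $\ll N^{1/y+o(1)}$; in particular the non-square pure powers in $[1,2N]$ number $\ll N^{1/3}$.

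For the local estimate, fix $y$, set $Q=\prod_{q\le y,\ q\text{ prime}}q$, and invoke Lemma~\ref{Linnik} to obtain a family $\mathcal P$ of primes $p\equiv1\pmod Q$ in a suitable range $[x,2x]$, with $\sum_{p\in\mathcal P}\log p$ of the expected size. For such $p$, every perfect $q$-th power with $q\le y$ lies modulo $p$ in the group $R_q$ of $q$-th power residues, so writing $\bar A_i$ for $A_i\bmod p$ the hypothesis $A_i+A_j\subset V\cap[1,2N]$ gives $\bar A_i+\bar A_j\subset\{0\}\cup\bigcup_{q\le y}R_q\cup E_p$ with $|E_p|\ll N^{1/y+o(1)}$. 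Choosing for each prime $q\le y$ a character $\chi_q$ modulo $p$ of order $q$ and expanding the indicator of $\bigcup_{q\le y}R_q$ into the $O_y(1)$ characters $\prod_{q\in T}\chi_q^{\,j_q}$, one applies Lemma~\ref{pacific} to each sum $\sum_{a\in\bar A_i}\sum_{b\in\bar A_j}\psi(a+b)$ (each of size at most $(p\,|\bar A_i|\,|\bar A_j|)^{1/2}$) and bounds the contribution of the pairs with $a+b\in E_p\cup\{0\}$ crudely by $(|E_p|+1)\min(|\bar A_i|,|\bar A_j|)$, obtaining $\min(|\bar A_i|,|\bar A_j|)\ll\sqrt p+|E_p|$. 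Since $k=2$ is the densest exponent, it is cleanest to exclude the perfect squares from the above and instead control the part of $A_i+A_j$ lying in the squares directly by Lemma~\ref{kati}.

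Now the five sets enter: for each $p\in\mathcal P$, if two of $A_1,\dots,A_5$ each occupied more than $C(\sqrt p+|E_p|)$ residue classes modulo $p$ then the previous inequality for that pair would fail, so at most one of them is ``large'' mod $p$; averaging over $p\in\mathcal P$ fixes an index $i_0$ with $|A_{i_0}\bmod p|\ll\sqrt p+|E_p|$ for a positive proportion of $p\in\mathcal P$. Feeding these primes into Gallagher's larger sieve (Lemma~\ref{gallagher}) with $\nu(p)\ll\sqrt p+|E_p|$ then bounds $|A_{i_0}|$, and with $y$ and the location $x$ of the primes chosen appropriately this yields $\min_i|A_i|\le|A_{i_0}|\ll(\log N)^M$ for a suitable constant $M$.

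The step I expect to be the genuine obstacle is the balancing of parameters underlying the last two paragraphs: forcing $|E_p|$ to be negligible pushes $y$, hence $Q$, hence (through Lemma~\ref{Linnik}) the primes of $\mathcal P$, to be large, which inflates both the $\sqrt p$ term and the trivial count of residue classes available to Gallagher, while handling many exponents simultaneously multiplies the number of character sums one must estimate. Arranging things so that Gallagher's sieve still returns a fixed power of $\log N$, rather than merely $N^{o(1)}$, is the delicate point, and is where the separate, stronger treatment of the ordinary squares via Lemma~\ref{kati} should do the decisive work. The rest — the case distinction $q\mid p-1$ versus $q\nmid p-1$, the corrections for sums $\equiv0\pmod p$, and the check that $\sum_{p\in\mathcal P}\log p$ is large enough for Gallagher — is routine.
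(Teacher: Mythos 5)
There is a genuine gap, and it is exactly at the point you flag as ``the genuine obstacle'': with your setup the parameter balancing cannot be made to work. You cover the exponents $q\le y$ by characters modulo primes $p\equiv 1\bmod Q$ with $Q=\prod_{q\le y}q$, and relegate the exponents $q>y$ to an exceptional set $E_p$ with $|E_p|\ll N^{1/y+o(1)}$, which enters Gallagher's sieve through $\nu(p)\ll \sqrt p+|E_p|$. To get a final bound $(\log N)^M$ you need $\nu(p)\ll(\log N)^{O(1)}$, hence $|E_p|\ll(\log N)^{O(1)}$, hence $y\gg \log N/\log\log N$; but then $Q=e^{(1+o(1))y}\ge N^{c/\log\log N}$, and since every prime $p\equiv 1\bmod Q$ satisfies $p>Q$ (and Linnik forces $p$ up to $Q^L$), the term $\sqrt p\ge\sqrt Q$ is already super\-polylogarithmic. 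Conversely, keeping $Q$ (hence $p$) polylogarithmic forces $y\ll\log\log N$ and $|E_p|\gg N^{1/\log\log N}$, so Gallagher returns at best $N^{o(1)}$. There is no window. The appeal to Lemma~\ref{kati} does not repair this, both because the troublesome exponents are the large primes $q>y$, not $q=2$, and because Lemma~\ref{kati} requires the \emph{entire} sumset $C+D$ to lie in the squares, which is not the situation here.

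The missing idea is a $2$-adic reduction that eliminates the exceptional set altogether. Write each element as $a=2^{u}(4s+e)$ with $e\in\{-1,1\}$ and pass, at the cost of a factor $\log N$, to the largest class $A_{i,u_i,e_i}$ in each $A_i$. The five sets are then used for a pigeonhole on the $(u_i,e_i)$: among five classes one finds two indices with $e_1=e_2$ and with $u_1,u_2$ either both zero or both positive, which forces $2^{g}\,\|\,(a_1+a_2)$ for the \emph{fixed} positive integer $g=g(u_1,u_2)\ll\log N$ ($g=u_1+1$ if $u_1=u_2$, else $g=\min\{u_1,u_2\}$). Since $a_1+a_2$ is a perfect $q$-th power, $q$ must divide $g$; thus every sum is a $q$-th power for some $q$ in the fixed set of prime divisors of $g$, with modulus $Q=\prod_{q\mid g}q\le 2g\ll\log N$ and \emph{no} exceptional exponents. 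Only after this reduction do the character-sum estimate (Lemma~\ref{pacific}), Linnik's theorem with $N_1=(\log N)^{L+1}$, and Gallagher's sieve combine as you envisage to give $(\log N)^{L/2+O(1)}$. Your pigeonhole ``at most one $A_i$ is large mod $p$'' is not where the five sets earn their keep; they are needed for the $(u,e)$-matching above.
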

For ease of exposition,
we do not attempt to prove the best possible exponent $M$, since it is
irrelevant for our application of Proposition \ref{prop:tripleset},
but following  
the more detailed argument in the proof of Theorem 2
in \cite{GSS:2003}, one could for example obtain the explicit
value $M=48$.

\begin{proof}
We follow the strategy of the proof of Theorem 2 in \cite{GSS:2003}
but make some necessary adjustments.
Let
\[
  A_i=\{a_{i1} , \ldots , a_{it_{i}} \} \quad (1 \le i \le 5),
\]
and let us write each $a_{ij}$ in the unique form
\begin{equation}
\label{mini}
  a_{ij}=2^{u_{ij}}(4s_{ij}+e_{ij}),
\end{equation}
where the $u_{ij},s_{ij}$ are non-negative integers and
$e_{ij}\in \{-1,1\}$. Note that all
\[
  u_{ij} \le \left \lfloor \frac{\log N}{\log 2} \right \rfloor.
\] 
For $i \in \{1, \ldots, 5\}$, $u\in \{0, 1, \ldots , \lfloor \log N/\log 2 \rfloor\}$
and $e \in \{-1,1\}$ we define
\[
  A_{i,u,e}=\{a_{ij} \in A_i: u_{ij}=u, e_{ij}=e\}.
\]
With this definition we have
\[
  A_i=\bigcup_{u \le \lfloor \log N/\log 2 \rfloor} \quad 
  \bigcup_{e \in \{-1,1\}} A_{i,u,e}.
\]
For a given $i$, choosing the parameters $(u_i,e_i)$ such that
$|A_{i,u_i,e_i}|$ have maximal cardinalities, it follows that 
\begin{align*}
  |A_i| \ll |A_{i, u_i, e_i}| \log N \quad (1 \le i \le 5). 
\end{align*}
If at least three of the five $u_i$ are zero, 
then we can without loss of generality
assume that $u_1=u_2=0$ and $e_1=e_2$. On the other hand,
 if at most two of the five
$u_i$ are zero, then we can without loss of generality assume that
$u_1>0$, $u_2>0$ and $e_1=e_2$.
In either case, for $a_1 \in A_1$ and $a_2 \in A_2$ we obtain
\[
  a_1+a_2=2^{u_1}(4s_1+e_1)+2^{u_2}(4s_2+e_2)
  =2^g(2z+1)=y^q,
\]
where
\[
  g=g(u_1,u_2)=\begin{cases}
    u_1+1 & \text{if } u_1=u_2\\
    \min\{u_1, u_2\} & \text{if } u_1 \ne u_2
  \end{cases}
\]
is positive, $s_1$, $s_2$, $z$ and $y$ are suitable
integers, and $q$ is a prime dividing $g(u_1,u_2)$. Let
\[
  Q(u_1,u_2)=\prod_{q|g(u_1,u_2)} q,
\]
and observe that 
\begin{equation}{\label{q-upperbound}}
Q(u_1,u_2)\ll \log N.
\end{equation}
For a prime $p\equiv 1 \bmod Q(u_1,u_2)$, we write
$\overline{A}_p$ and $\overline{B}_p$ for the reductions of $A_{1,u_1,e_1}$
and $A_{2, u_2, e_2}$, respectively, modulo $p$.
The following result will be crucial
for the application of Gallagher's larger sieve.
\begin{lemma}
\label{sunshine}
Let $\varepsilon>0$. Further,
for fixed $(u_1,u_2)$, let $p$ be a prime with
$p \equiv 1 \bmod Q(u_1,u_2)$,
and let $\overline{A}$ and $\overline{B}\subset \Z/p\Z$ have the
following property:
If $a\in \overline{A}$ and $b\in \overline{B}$, then 
there is a prime $q$ dividing $g(u_1,u_2)$ and an element $y \in \Z/p\Z$,
such that $a+b\equiv y^q  \bmod p$. Then
\[
  |\overline{A}| |\overline{B}|
  \ll_{\varepsilon} p^{1+\varepsilon}.
\]
\end{lemma}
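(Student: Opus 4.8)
The plan is to bound $|\overline{A}||\overline{B}|$ by controlling, for each fixed prime $q \mid g(u_1,u_2)$, the number of pairs $(a,b) \in \overline{A}\times\overline{B}$ with $a+b$ a $q$-th power residue modulo $p$, and then summing over the boundedly many such $q$. The key point is that the condition $p \equiv 1 \bmod Q(u_1,u_2)$ guarantees $q \mid p-1$, so the multiplicative group $(\Z/p\Z)^*$ has a subgroup of $q$-th powers of index exactly $q$, and membership in the set of $q$-th power residues (together with $0$) can be detected by characters of order dividing $q$.

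First I would fix such a prime $q$ and write $\chi_1, \ldots, \chi_{q-1}$ for the nontrivial multiplicative characters modulo $p$ of order dividing $q$. The standard indicator identity gives that, for $m \not\equiv 0$, the quantity $\frac{1}{q}\sum_{j=0}^{q-1}\chi_0^{(j)}(m)$ — more precisely $\frac1q\bigl(1 + \sum_{\ell=1}^{q-1}\chi_\ell(m)\bigr)$ — equals $1$ if $m$ is a $q$-th power residue and $0$ otherwise, where I abuse notation writing $\chi_0$ for the trivial character extended by $\chi_0(0)=1$ and handling $m\equiv 0$ separately (it contributes at most $|\overline{A}|+|\overline{B}|$ pairs, a negligible term). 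Hence the number of pairs $(a,b)\in\overline{A}\times\overline{B}$ with $a+b$ a $q$-th power residue is
\[
  \frac{1}{q}\sum_{a\in\overline{A}}\sum_{b\in\overline{B}}\Bigl(1+\sum_{\ell=1}^{q-1}\chi_\ell(a+b)\Bigr) + O(|\overline{A}|+|\overline{B}|)
  = \frac{|\overline{A}||\overline{B}|}{q} + \frac{1}{q}\sum_{\ell=1}^{q-1}\sum_{a\in\overline{A}}\sum_{b\in\overline{B}}\chi_\ell(a+b) + O(p).
\]
Now I apply Lemma \ref{pacific} (Erd\H{o}s--Shapiro) to each inner double sum: since each $\chi_\ell$ is a nontrivial multiplicative character mod $p$, we get $\bigl|\sum_{a}\sum_{b}\chi_\ell(a+b)\bigr| \le (p|\overline{A}||\overline{B}|)^{1/2}$. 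Therefore the count of such pairs is at most $\frac{|\overline{A}||\overline{B}|}{q} + (p|\overline{A}||\overline{B}|)^{1/2} + O(p)$.

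Next I sum over all primes $q \mid g(u_1,u_2)$. By the hypothesis, every pair $(a,b)\in\overline{A}\times\overline{B}$ satisfies $a+b\equiv y^q \bmod p$ for at least one such $q$, so $|\overline{A}||\overline{B}|$ is at most the sum of the above bounds over $q$. The number of distinct primes dividing $g(u_1,u_2)$ is $\omega(g(u_1,u_2)) = O(\log N/\log\log N)$ since $g(u_1,u_2) \ll \log N$; call this $\omega$. Also every $q \ge 2$, so $\sum_{q\mid g}\frac1q \le \frac{\omega}{2}$; more usefully, the dominant term $\frac{|\overline A||\overline B|}{q}$ summed over the at most $\omega$ primes $q\ge 2$ is bounded by a fraction of $|\overline A||\overline B|$ only if $\omega$ is small — and indeed it is, but rather than fight that I would instead argue directly: rearranging, $\bigl(1 - \sum_{q\mid g}\frac1q\bigr)|\overline A||\overline B| \le \omega\bigl((p|\overline A||\overline B|)^{1/2} + O(p)\bigr)$. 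The main obstacle is that $\sum_{q\mid g}\frac1q$ need not be bounded below away from $1$; however, one can circumvent this by noting that if $\sum_{q\mid g}\frac1q \ge \tfrac12$ then $g$ has very few prime factors and one handles the few largest $q$'s separately, or more simply one just picks the single $q$ maximizing the number of good pairs so that that $q$ alone accounts for $\ge |\overline A||\overline B|/\omega$ pairs, giving
\[
  \frac{|\overline A||\overline B|}{\omega} \le \frac{|\overline A||\overline B|}{q} + (p|\overline A||\overline B|)^{1/2} + O(p) \le \frac{|\overline A||\overline B|}{2} + (p|\overline A||\overline B|)^{1/2} + O(p).
\]

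Finally, assuming $|\overline A||\overline B|$ is large (otherwise the claimed bound is trivial), for $N$ large the term $\frac{|\overline A||\overline B|}{2}$ on the right is at most, say, $\frac{|\overline A||\overline B|}{\omega} \cdot \frac{\omega}{2}$, which is too weak unless $\omega$ is bounded; so here the honest fix is to absorb the $1/\omega$ loss into the $\varepsilon$: write $|\overline A||\overline B| \le \omega\bigl((p|\overline A||\overline B|)^{1/2}+O(p)\bigr) + \frac{\omega}{2}|\overline A||\overline B|$ is the wrong inequality — instead, taking the maximizing $q$, $(p|\overline A||\overline B|)^{1/2} + O(p) \ge |\overline A||\overline B|(1/\omega - 1/2)$, and since $1/\omega - 1/2 < 0$ for $\omega \ge 3$ this is vacuous, confirming that the genuine argument must keep all $q$ and use $\sum_{q \mid g} 1/q < 1$ \emph{as long as $g$ is not of the special shape} $2\cdot 3 = 6$ or $2\cdot 3\cdot 5=30$ etc., which forces $\omega = O(\log N/\log\log N)$ and $\sum 1/q \le 1 - c$ for an absolute $c>0$ by Mertens-type estimates (the sum of reciprocals of the smallest $\omega$ primes tends to infinity, but here $g \ll \log N$ bounds the product of those primes, hence their reciprocal sum, crucially, by $O(\log\log\log N)$ — no wait, it bounds $\omega$). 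The cleanest route, which I would actually carry out, is: the primes $q\mid g$ with $q > \omega$ number fewer than $g/\omega$... In any case, the final bound $(1-o(1))|\overline A||\overline B| \ll \omega\,(p|\overline A||\overline B|)^{1/2} + \omega p \ll (\log N)^{\varepsilon}(p|\overline A||\overline B|)^{1/2}$ yields, after dividing by $(|\overline A||\overline B|)^{1/2}$ and squaring, $|\overline A||\overline B| \ll_\varepsilon p^{1+\varepsilon}$, since any power of $\log N$ is $O_\varepsilon(p^\varepsilon)$ once $p$ exceeds a fixed power of $\log N$, and small $p$ are handled trivially. The step I expect to be the main obstacle is cleanly controlling the $\sum_{q \mid g}\tfrac{1}{q}$ factor so that it does not swallow the whole of $|\overline A||\overline B|$; the resolution is that $g(u_1,u_2) \ll \log N$ forces $\omega(g) = O(\log N/\log\log N)$ distinct prime factors, whose reciprocal sum is $O(\log\log\log N) = (\log N)^{o(1)}$, and this slowly-growing factor is absorbed into $p^\varepsilon$ exactly as above.
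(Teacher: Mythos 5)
Your setup is right --- the characters of order $q$ exist because $p\equiv 1\bmod Q(u_1,u_2)$, and Lemma \ref{pacific} is the correct tool for the nontrivial character sums --- and you have correctly located the obstacle, but you have not overcome it, and your final display does not follow from anything preceding it. Summing the per-$q$ counts is a union bound, and it produces the main-term coefficient $\sum_{q\mid g}\tfrac1q$ in front of $|\overline A||\overline B|$; as soon as $2\cdot3\cdot5\mid g(u_1,u_2)$ this exceeds $1$, so the inequality $|\overline A||\overline B|\le\bigl(\sum_q\tfrac1q\bigr)|\overline A||\overline B|+\omega\bigl((p|\overline A||\overline B|)^{1/2}+O(p)\bigr)$ is vacuously true and yields nothing. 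Your two proposed repairs both fail: the ``maximizing $q$'' device gives $\tfrac1\omega\le\tfrac1q+\cdots$ which, as you yourself note, is empty once $\omega\ge 3$ and the maximizing $q$ is small; and the concluding claim that the reciprocal sum, being $(\log N)^{o(1)}$, ``is absorbed into $p^{\varepsilon}$'' is not legitimate, because that factor multiplies the quantity $|\overline A||\overline B|$ you are trying to bound, not the square-root error term --- a coefficient $\ge 1$ on the main term cannot be traded for a power of $p$ on the other side no matter how slowly it grows.

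The missing idea is to detect membership in the \emph{union} of the events ``$a+b$ is a $q$-th power residue'' by full inclusion--exclusion rather than by a union bound. Concretely, one uses the function $F^*$ of Gyarmati, S\'ark\"ozy and Stewart: the product over $q\in Q$ of the indicators $\tfrac1q\sum_{j=0}^{q-1}\chi_q^j$, expanded by inclusion--exclusion over subsets $\{q_1<\cdots<q_\ell\}$ of $Q$. The trivial-character terms then sum to $1-\prod_{q\mid g}(1-\tfrac1q)$ rather than $\sum_{q\mid g}\tfrac1q$, so after rearranging one gets
\[
  \prod_{q\mid g}\Bigl(1-\frac1q\Bigr)\,(|\overline A||\overline B|)^{1/2}
  \le 2^{\omega(g)}p^{1/2}+1 .
\]
The left-hand coefficient is always positive and, by Mertens and $Q(u_1,u_2)<p$, is $\gg 1/\log p$; the price paid is the factor $2^{\omega(g)}$ from the $2^{\omega(g)}-1$ nonempty subsets in the inclusion--exclusion, which is $\ll_\varepsilon p^{\varepsilon}$ since $\omega(g)\ll\log p/\log\log p$. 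This is exactly where the $p^{\varepsilon}$ in the statement comes from, and without replacing $1-\sum_q\tfrac1q$ by $\prod_q(1-\tfrac1q)$ the argument cannot be closed.
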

We postpone the proof of this lemma in order not to interrupt the
flow of our main argument.
By Lemma \ref{Linnik}
it follows that for $N_1\geq Q(u_1,u_2)^{L}$ we have
\begin{align}
\label{lower}
  \sum_{\substack{p \le N_1\\p \equiv 1 \bmod {Q(u_1,u_2)}}}
  \frac{\log p}{p^{1/2+\varepsilon}}  
  & \gg
  N_1^{-1/2-\varepsilon}
  \sum_{\substack{p \le N_1\\p \equiv 1 \bmod {Q(u_1,u_2)}}} \log p
  \nonumber \\
  & \gg
  \frac{N_1^{1/2-\varepsilon}}{\varphi(Q(u_1,u_2))\sqrt{Q(u_1,u_2)}}.
\end{align}
Moreover, for 
$N_1\geq Q(u_1,u_2)^{L}$ the Brun-Titchmarsh Theorem gives
\begin{align}
\label{bt}
  \sum_{\substack{p \le N_1\\p \equiv 1 \bmod {Q(u_1,u_2)}}}
  \log p & \le \log N_1
  \sum_{\substack{p \le N_1\\p \equiv 1 \bmod {Q(u_1,u_2)}}} 1
  \nonumber\\
  & \ll \log N_1 \frac{N_1}{\varphi(Q(u_1, u_2)) (\log N_1-\log Q)}
  \ll \frac{N_1}{\varphi(Q(u_1, u_2))}.
\end{align}
For $p \equiv 1 \bmod {Q(u_1,u_2)}$ write $\nu_A(p)$ for the
cardinality of $\overline{A}_p$, and $\nu_B(p)$ for the
cardinality of $\overline{B}_p$. Then
by Lemma \ref{sunshine} and \eqref{lower} we get
\[
  \sum_{\substack{p \le N_1\\p \equiv 1 \bmod {Q(u_1,u_2)}}}
  \frac{\log p}{\min\{\nu_A(p), \nu_B(p)\}}  
  \gg \frac{N_1^{1/2-\varepsilon}}{\varphi(Q(u_1,u_2))\sqrt{Q(u_1,u_2)}}.
\]
Without loss of generality we may assume that
\begin{equation}
\label{sonne}
  \sum_{\substack{p \le N_1\\p \equiv 1 \bmod {Q(u_1,u_2)}}}
  \frac{\log p}{\nu_A(p)}  
  \gg \frac{N_1^{1/2-\varepsilon}}{\varphi(Q(u_1,u_2))\sqrt{Q(u_1,u_2)}}.
\end{equation}
Applying Lemma \ref{gallagher}, the bounds
\eqref{q-upperbound},  \eqref{bt} 
and \eqref{sonne}, and choosing $N_1=(\log N)^{L+1}$
we obtain
\begin{align*}
  |A_{1,u_1,e_1}| & \leq \frac{-\log N +\sum_{
   p \le N_1, p \equiv 1 \bmod {Q(u_1,u_2)}} \log p}{- \log N+ 
 \sum_{p \le N_1, p \equiv 1 \bmod {Q(u_1,u_2)}} \frac{\log p}{\nu_A(p)}} \\
 & \ll \frac{-\log N + \frac{N_1}{\varphi(Q(u_1,u_2))}}{-
   \log N +\frac{N_1^{1/2-\varepsilon}}{\varphi(Q(u_1,u_2))\sqrt{Q(u_1,u_2)}}} \\
 & \ll N_1^{1/2+\varepsilon} \sqrt{Q(u_1,u_2)}.
\end{align*}
Since $Q(u_1, u_2) \ll \log N$, and $N_1=(\log N)^{L+1}$ it follows that
the hypothesis  $N_1 \geq Q(u_1,u_2)^L$ above is satisfied,
and we obtain
\[
  |A_{1,u_1,e_1}| \ll (\log N)^{L/2+2}.
\]
Hence
\[
   |A_1| \ll (\log N)^{L/2+3}.
\]
It remains to prove Lemma \ref{sunshine}. We closely follow the proof
of Lemma 2 in \cite{GSS:2003} and will therefore be rather brief.
It is convenient to write
\[
  Q=\{q_1, \ldots, q_K\}
\]
for the set of all primes dividing $g(u_1,u_2)$. Now let
\[
  F(n) = \begin{cases}
  1 & \mbox{if $x^q \equiv n \bmod p$ is solvable for some $q \in Q$}\\
  0 & \mbox{otherwise}.
  \end{cases}
\]
Moreover, let $F^*(n)$ be as introduced in formula (2.9) in
\cite{GSS:2003}, i.e.
\[
  F^*(n) = \sum_{\ell=1}^K (-1)^{\ell+1}
  \sum_{\substack{q_1 < \cdots < q_\ell \\ q_i \in Q}}
  \frac{1}{q_1 \cdots q_l}
  \sum_{j_1=0}^{q_1-1} \cdots \sum_{j_\ell=0}^{q_\ell-1}
  \chi_{q_1}^{j_1} \cdots \chi_{q_\ell}^{j_\ell}(n),
\]
where the $\chi$ are suitable multiplicative characters modulo $p$;
note that we make use of our assumption $p \equiv 1 \mod {Q(u_1, u_2)}$
here.
Observe that $F(n)=F^*(n)$, if $(n, p)=1$. On the other hand,  $F(n)=1$ but
$F^*(n)=0$, when $n$ is divisible by $p$. Consequently,
\begin{align*}
  \sum_{a \in \overline{A}} \sum_{b \in \overline{B}} F(a+b)
  & \le \sum_{a \in \overline{A}} \sum_{b \in \overline{B}} F^*(a+b) +
  \min\{|\overline{A}|, |\overline{B}|\} \\
  & \le \sum_{a \in \overline{A}} \sum_{b \in \overline{B}} F^*(a+b) 
  + (|\overline{A}| |\overline{B}|)^{1/2}.
\end{align*}
As shown on page 6 of \cite{GSS:2003},
$\chi_{q_1}^{j_1} \cdots \chi_{q_\ell}^{j_\ell}$ is the main
character if and only if $j_1 = \ldots = j_\ell = 0$.
Thus
\[
  \sum_{a \in \overline{A}} \sum_{b \in \overline{B}} F^*(a+b)
   = {\sum_1 \nolimits + \sum_2 \nolimits},
\]
where
\begin{align*}
  {\sum_1 \nolimits} & = \sum_{\ell=1}^K (-1)^{\ell+1}
  \sum_{\substack{q_1 < \ldots < q_\ell \\ q_i \in Q}}
  \frac{1}{q_1 \cdots q_\ell}
  \sum_{\substack{a \in \overline{A}, b \in \overline{B} \\ p \nmid a+b}} 1\\
  & = \left(1-\prod_{q \in Q} (1-\frac{1}{q}) \right)
  \sum_{\substack{a \in \overline{A}, b \in \overline{B} \\ p \nmid a+b}}
  1 \\
  & \le
  \left(1-\prod_{q \in Q} (1-\frac{1}{q}) \right) |\overline{A}| |\overline{B}| \\
\end{align*}
and
\[
  {\sum_2 \nolimits} = \sum_{\ell=1}^K (-1)^{\ell+1}
  \sum_{\substack{q_1 < \ldots < q_\ell \\ q_i \in Q}}
  \frac{1}{q_1 \cdots q_l}
  \sum_{\substack{0 \le j_1 \le q_1, \ldots, 0 \le j_\ell \le q_\ell \\
  (j_1, \ldots, j_\ell) \ne (0, \ldots, 0)}} \;
  \sum_{a \in \overline{A}} \sum_{b \in \overline{B}}
  \chi_{q_1}^{j_1} \cdots \chi_{q_\ell}^{j_\ell}(a+b).
\]
As remarked above, here the characters
$\chi_{q_1}^{j_1} \cdots \chi_{q_\ell}^{j_\ell}$ are all different from
the main character, whence Lemma \ref{pacific} gives
\[
  \left| \sum_{a \in \overline{A}} \sum_{b \in \overline{B}}
  \chi_{q_1}^{j_1} \cdots \chi_{q_\ell}^{j_\ell}(a+b) \right|
  \le (p |\overline{A}| |\overline{B}|)^{1/2}.
\]
Hence
\[
  |{\sum_2 \nolimits}| \le (p |\overline{A}| |\overline{B}|)^{1/2}
  \sum_{\ell=1}^K \sum_{\substack{q_1 < \cdots < q_\ell \\ q_i \in Q}} 1
  \le (p |\overline{A}| |\overline{B}|)^{1/2} 2^K.
\]
Note that $K=\omega(g(u_1,u_2))$,
where $\omega(t)$ denotes the number of distinct prime factors of $t$.
Summarising the above, we obtain
\begin{align*}
  \sum_{a \in \overline{A}} \sum_{b \in \overline{B}} F(a+b) & \le
  \left( 1-\prod_{q | Q(u_1,u_2)} (1-\frac{1}{q}) \right)
  |\overline{A}| |\overline{B}| \\
  & + 2^{\omega(g(u_1,u_2))} (p |\overline{A}| |\overline{B}|)^{1/2} +
  (|A| |B|)^{1/2}.
\end{align*}
On the other hand,
\[
  \sum_{a \in \overline{A}} \sum_{b \in \overline{B}} F(a+b)
  = |\overline{A}| |\overline{B}|
\]
by our assumption that for all $a \in \overline{A}$ and $b \in \overline{B}$
there exists a prime $q \in Q$ such that $a+b \equiv x^q \bmod p$
for some integer $x$.
Consequently,
\[
  \prod_{q | Q(u_1,u_2)} \left(1-\frac{1}{q} \right)
  (|\overline{A}| |\overline{B}|)^{1/2}
  \le p^{1/2} 2^{\omega(g(u_1,u_2))} + 1
\]
holds. Let us recall that by the definition of $Q(u_1,u_2)$, we have
$\omega(g(u_1,u_2))=\omega(Q(u_1,u_2))$, and in
general $\omega(n)\leq 2 \frac{\log n}{\log \log n}$.
Moreover, $p>Q(u_1,u_2)$, as $p\equiv 1 \bmod Q(u_1,u_2)$.
The result thus follows on noting that
\[
  2^{\omega(g(u_1,u_2))} = 2^{\omega(Q(u_1, u_2))} \leq
  2^{2\log Q(u_1, u_2)/\log \log Q(u_1,u_2)} \leq 
 2^{2\log p/\log \log p} \ll_{\varepsilon} p^{\varepsilon} 
\]
and
\[
  \prod_{q | Q(u_1,u_2)} \left(1-\frac{1}{q} \right)^{-1} \ll
  \exp(\sum_{q \le Q(u_1,u_2)} \frac{1}{q}) \ll \log Q(u_1,u_2) \le \log p
\]
by Merten's Theorem.
\end{proof}
\subsection{Subsetsums in pure powers}
Recall that $V$ denotes the set of pure powers.
For $N \in \N$ write $\ell(N)$ for the maximum
positive integer $\ell$ such that there exists $x \in V$ such that
all elements of the arithmetic progression $x, 2x, \ldots, \ell x$ are
elements of $V \cap [1,N]$.
\begin{lemma}
\label{lem:progressionlength}
For sufficiently large $N$, we have
\[
  \frac{\log \log N}{\log \log \log N} \ll \ell(N)
  \ll \frac{\log \log N}{\log \log \log N}.
\]
\end{lemma}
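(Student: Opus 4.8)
The plan is to prove the two estimates separately; the lower bound will be a direct construction, while the upper bound is the substantive half. Its guiding idea is that if $x,2x,\dots,\ell x$ are all pure powers, then the exponents occurring in the prime factorisation of $x$ are forced to be divisible by many distinct primes, which makes $x$ doubly-exponentially large in $\ell$.

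For the upper bound I would assume $\ell=\ell(N)$ is large, fix $x\in V$ with $x,2x,\dots,\ell x\in V\cap[1,N]$, and for each $j\le\ell$ pick a prime $n_j$ with $jx$ an $n_j$-th power. The first step is to show that $j\mapsto n_j$ is injective on $E:=\{j\le\ell:n_j>\log_2\ell\}$: if $n_j=n_{j'}=n>\log_2\ell$, then for every prime $p$ the relations $v_p(j)+v_p(x)\equiv0\equiv v_p(j')+v_p(x)\pmod n$ force $n\mid v_p(j)-v_p(j')$, and since $0\le v_p(j),v_p(j')\le\log_2\ell<n$ this gives $v_p(j)=v_p(j')$ for all $p$, i.e.\ $j=j'$. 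The second step is that $E$ is almost all of $\{1,\dots,\ell\}$: for a fixed prime $n$ the $n$-free part of $x$ determines which $n$-free part $j$ must have in order that $jx$ be an $n$-th power, so at most $\ell^{1/n}\le\sqrt\ell$ values $j\le\ell$ make $jx$ an $n$-th power, and summing over primes $n\le\log_2\ell$ gives $\#(\{1,\dots,\ell\}\setminus E)\le\sqrt\ell\,\log_2\ell=o(\ell)$. Hence at least $\ell/3$ of the odd $j\le\ell$ lie in $E$, and for each such $j$ one has $n_j\mid v_2(jx)=v_2(x)$; by the injectivity these $n_j$ are pairwise distinct primes. Since $2x\in V$ forces $v_2(2x)\ge2$, the integer $v_2(x)\ge1$ is divisible by $\ge\ell/3$ distinct primes, so $v_2(x)\ge\prod_{i\le\lceil\ell/3\rceil}p_i=\exp\!\big((\tfrac13+o(1))\,\ell\log\ell\big)$ by the prime number theorem, $p_i$ denoting the $i$-th prime. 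Then $\log x\ge v_2(x)\log2\ge\exp(c\,\ell\log\ell)$ for an absolute $c>0$, and $x\le N$ forces $\ell\log\ell\ll\log\log N$, i.e.\ $\ell\ll(\log\log N)/(\log\log\log N)$.

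For the lower bound I would, given a parameter $\ell$, take $Q_1,\dots,Q_\ell$ to be the first $\ell$ primes, set $Q=Q_1\cdots Q_\ell$, and for each prime $p$ let $e_p\in[0,Q)$ be the residue (unique by the Chinese Remainder Theorem) with $e_p\equiv-v_p(j)\pmod{Q_j}$ for all $j=1,\dots,\ell$; note $e_p=0$ for $p>\ell$. Put $x=\prod_{p\le\ell}p^{e_p}$. Then $v_p(jx)=v_p(j)+e_p\equiv0\pmod{Q_j}$ for all $p$, so each $jx$ is a $Q_j$-th power, and $e_2\equiv-1\not\equiv0\pmod{Q_2}$ ensures $x>1$, so $x,2x,\dots,\ell x\in V$. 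Moreover $\log(\ell x)\le\log\ell+Q\sum_{p\le\ell}\log p=\exp\!\big((1+o(1))\,\ell\log\ell\big)$, using the primorial bound $Q=\exp((1+o(1))\ell\log\ell)$ (the $\ell$-th prime being $\sim\ell\log\ell$). Taking $\ell$ to be the largest integer with $\ell\log\ell\le\tfrac12\log\log N$ then yields $\log(\ell x)\le\log N$, hence $\ell(N)\ge\ell$; and solving $\ell\log\ell\asymp\log\log N$ gives $\ell\gg(\log\log N)/(\log\log\log N)$.

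The hard part is the upper bound, where one must at once (i) produce $\gg\ell$ distinct primes among the $n_j$ — the $\pi(\ell)\sim\ell/\log\ell$ primes coming from prime values of $j$ alone are not enough — and (ii) force all of them to divide the single exponent $v_2(x)$. Both are accomplished by the injectivity step combined with the cheap count of $j$ for which $jx$ is a perfect $n$-th power with $n$ small; the rest is only Chebyshev-type prime estimates, and all "$o(1)$" and "for $\ell$ large" statements are harmless since $\ell(N)\to\infty$.
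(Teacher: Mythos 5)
Your proof is correct. For the upper bound the overall skeleton matches the paper's: both arguments hinge on the $2$-adic valuation, noting that $x$ must be even, that for odd $j$ each $jx\in V$ forces a prime $n_j$ dividing $v_2(x)$, and that producing $\gg\ell$ \emph{distinct} such primes dividing the single integer $v_2(x)\le\log_2 N$ yields $\ell\ll\log\log N/\log\log\log N$ (the paper invokes $\omega(m)\ll\log m/\log\log m$ where you unpack the same bound via the primorial). The genuine difference is in how distinctness of the $n_j$ is secured. The paper restricts attention to odd \emph{squarefree} $n\le\ell$ and argues that if $n_1x$ and $n_2x$ are both $q$-th powers then $n_1/n_2=(a/b)^q$, which for squarefree $n_1\ne n_2$ is impossible; this costs only a constant factor since a positive proportion of integers are odd and squarefree. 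You instead keep all odd $j$ but split according to the size of $n_j$: for $n_j>\log_2\ell$ you get injectivity directly from comparing $p$-adic valuations (since $0\le v_p(j)\le\log_2\ell<n_j$), and you dispose of the small primes $n\le\log_2\ell$ by the clean count that at most $\ell^{1/n}\le\sqrt{\ell}$ values $j\le\ell$ can make $jx$ a perfect $n$-th power, so only $o(\ell)$ indices are lost. Both mechanisms are sound; yours avoids the squarefree-ratio trick at the price of the extra counting step. For the lower bound the paper simply cites the proof of Theorem 3 of Gyarmati--S\'ark\"ozy--Stewart, whereas you write out the explicit CRT construction $x=\prod_{p\le\ell}p^{e_p}$ with $e_p\equiv -v_p(j)\bmod Q_j$; this is essentially the standard construction and your size estimate $\log(\ell x)=\exp((1+o(1))\ell\log\ell)$ and choice of $\ell$ are correct.
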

\begin{proof}
For the lower bound, see the proof of Theorem 3 in \cite{GSS:2003}.
The proof of the upper bound starts with the observation that $x$ cannot
be odd, since otherwise $2x \equiv 2 \bmod 4$ would contradict $2x \in
V$. Therefore it follows that
 $2^\alpha || x$, where $\alpha \ge 1$. Then also
$2^\alpha || nx$ for all odd $n$ where $n \le \ell$. In particular,
from $nx \in V$ we conclude that $nx$ must be a $q$-th power
of an integer for some prime $q$ dividing $\alpha$. Now $2^\alpha | x$
where $x \le N$, so $\alpha \ll \log N$, whence there are only
$$\omega(\alpha)\ll \frac{\log \alpha}{\log \log \alpha}=
O\left(\frac{\log \log N}{\log \log \log N}\right)$$ possibilities for $q$.
An elementary but useful
 observation is: if $n_1, n_2 \in \N$ are both square-free
and $n_1x$ and $n_2x$ are both a $q$-th power for some prime $q \ge 2$,
then necessarily $n_1=n_2$: Write $n_1x=a^q$ and $n_2x=b^q$ for suitable
$a, b \in \N$. Then
\[
  \frac{n_1}{n_2} = \left(\frac{a}{b}\right)^q.
\]
Since $\frac{n_1}{n_2}$ is square-free, $\sqrt[q]{\frac{n_1}{n_2}}$ is
irrational unless $n_1=n_2$.
Now write
\[
  \mathcal{N}=\{n \in \N: n \le \ell, n \equiv 1 \bmod 2 \mbox{ and $n$
  is square-free}\}.
\]
As shown in the observation above, if $n_1, n_2\in \mathcal{N}$ 
are distinct elements, 
then $n_1x$ and $n_2x$ are a $q_1$-th power and a
$q_2$-th power of a suitable integer, respectively, where $q_1$ and $q_2$
are distinct primes. On the one hand,
it is well known that $|\mathcal{N}| \gg \ell$. On the other hand, we have seen
that there are only $O\left(\frac{\log \log N}{\log \log \log N}\right)$
possibilities for those prime exponents $q_1, q_2$. Therefore,
$\ell = O\left(\frac{\log \log N}{\log \log \log N}\right)$.
\end{proof}

\begin{proof}[Proof of Theorem \ref{brunnen}]
By Lemma \ref{progressionfreelemma} and Lemma \ref{lem:progressionlength}
we have
\[
  g(h, N) \le h! \left( c_1 \frac{\log \log N}{\log \log \log N}
  \right)^{h-1}
\]
for a suitable absolute constant $c_1$.
Proposition \ref{prop:tripleset} then reads
\[
  f(N) \le c_2 (\log N)^M
\]
for suitable absolute constants $c_2$ and $M$. Now Lemma
\ref{vancouver} gives
\[
  d \le 5 \left(h! c_2 (\log N)^M h!
  \left( c_1 \frac{\log \log N}{\log \log \log N} \right)^{h-1}
  \right)^{1/h} + 5h + 4.
\]
Choosing $h=\log \log N$ and using the trivial bound $h! \le h^h$
yields the claimed result
$d=O\left(\frac{(\log \log N)^3}{\log \log \log N}\right)$.

When allowing repeated values of the $a_i$, it is clear that the 
maximal multiplicity is 
$m=O(\frac{\log \log N}{\log \log \log N})$, as the set of 
subset sums of $m$ times the same element $a_i$ generates 
a homogeneous arithmetic progression of length 
$m\leq \ell(N) = O(\frac{\log \log N}{\log \log \log N})$.

\end{proof}

\section{Proofs of Theorems \ref{infinitecubeV} and
\ref{infinitecubeW}}

Let us first prove Theorem \ref{infinitecubeV}.
We can adapt the proof of Theorem 4 of \cite{GSS:2003}.
It is enough to show that there are no five infinite sets
$A_1, \ldots, A_5 \subset \mathbb{Z}$ such that
$A_1 + \cdots + A_5 \subset V$.
To this end we write the elements of each set $A_i$ again in the
form \eqref{mini}. By relabeling $A_1, \ldots, A_5$ if necessary,
as in the proof of Proposition \ref{prop:tripleset} we can without
loss of generality assume that $e_1=e_2$ and one of the following is
true:\\
\emph{Case 1:} Both sequences $u_{1j}$ and $u_{2j}$ are bounded.\\
\emph{Case 2:} Both sequences $u_{1j}$ and $u_{2j}$ are unbounded.\\
In each case we can finish the argument as in Case 1 or Case 2,
respectively, in the proof of Theorem 4 of \cite{GSS:2003}.\\
Next, let us establish Theorem \ref{infinitecubeW}.
Without loss of generality we may assume that all the $a_i$ are distinct.
For, otherwise, some element $a$ occurs infinitely often,
generating an infinite arithmetic progression, which contradicts
the observation that arithmetic progressions can only be short compared 
to the size of any finite interval $[1,N]$,
i.e. $k \leq (2+o(1))\log N$, made above in the proof of 
Theorem \ref{thm:Hilbert-powerful}.

Then first note that
Granville (\cite{Granville:1998}, Theorem 7) proved that the $abc$
conjecture implies that
\[
  \lim_{n \rightarrow \infty} (t_{n+2} - t_n) = \infty,
\]
where $t_n$ denotes the $n$-th powerful number.
The existence of an infinite Hilbert cube in $W$ would imply
that the following values are all powerful, for $i=3,4, \ldots $:
\[ a_0+a_i< a_0 +a_1 +a_i < a_0+a_1+a_2+a_i,\]
 but their difference is bounded, a contradiction.
Finally, observe that
$P(x)=x(x+a_1)(x+a_1+a_2)$ has three distinct simple zeros. 
For every $x\in H(a_0;a_3,a_4, \ldots )$
all elements $x, x+a_1$ and $x+a_1+a_2$ are powerful,
so that the product is
powerful, too. But, assuming Conjecture \ref{red2},
there are only finitely many possible values for $x$.


\begin{thebibliography}{99}

\bibitem{AlonandAngelandBenjaminiandLubetzky:2012}
N.~Alon, O.~Angel, I.~Benjamini, E.~Lubetzky,
Sums and products along sparse graphs.
Israel Journal of Mathematics 188 (1) (2012), 353--384.

\bibitem{Bergelson:2010}
V.~Bergelson,
Ultrafilters, IP sets, Dynamics, and Combinatorial Number Theory. 
Contemporary Mathematics 530 (2010), 23--47. 

\bibitem{BrownandErdosandFreedman:1990}
T.C.~Brown, P.~Erd\H{o}s, A.R.~Freedman,
Quasi-progressions and descending waves. 
J. Comb. Theory, Ser. A 53 (1990), no.~1, 81--95.

\bibitem{CillerueloandGranville:2007}
J.~Cilleruelo, A.~Granville,
Lattice points on circles, squares in arithmetic progressions, 
and sumsets of squares.
In ``Additive Combinatorics", CRM Proceedings \& Lecture Notes,
vol. 43 (2007), 241--262.

\bibitem{ConlonFoxSudakov}
D.~Conlon, J.~Fox and B.~Sudakov, 
Short proofs of some extremal results.
Combinatorics, Probability and Computing
23 (2014), 8--28.

\bibitem{CrootandLabaandSisask:2013}
E.~Croot, I.~\L aba,  O.~Sisask, 
Arithmetic progressions in sumsets and $L^p$-almost-periodicity.
Combin. Probab. Comput. 22 (2013), no. 3, 351--365. 

\bibitem{CrootandRuzsaandSchoen:2007}
E.~Croot, I.~Ruzsa, T.~Schoen,
Arithmetic progressions in sparse sumsets.
Integers 7, No. 2, Paper A10 (2007).

\bibitem{Csikvari:2008}
P.~Csikv\'{a}ri,
Subset sums avoiding quadratic nonresidues.
Acta Arith. 135 (2008), no. 1, 91--98. 

\bibitem{DarmonandMerel:1997}
H.~Darmon, L.~Merel:
Winding quotients and some variants of Fermat's last theorem. 
J. Reine Angew. Math. 490 (1997), 81--100.

\bibitem{Dickson:1966}
L.E.~Dickson, History of the theory of numbers, 
Vol. II: Diophantine analysis; 
reprint: Chelsea Publishing Co., New York 1966.

\bibitem{DietmannandElsholtz:2012}
R.~Dietmann, C.~Elsholtz, 
Hilbert cubes in progression-free sets and in the set of squares.
Israel Journal of Mathematics, 
Israel J. Math. 192 (2012), 59--66.

\bibitem{DujellaandElsholtz:2013}
A.~Dujella, C.~Elsholtz, Sumsets being squares,
Acta Math. Hungar. 141 (2013), 353--357. 


\bibitem{Erdos:1981}
P.~Erd\H{o}s, On the combinatorial problems which I would most 
like to see solved. Combinatorica 1 (1981), no. 1, 25--42.

\bibitem{ErdosandRado:1960}
P.~Erd\H{o}s, R.~Rado,
Intersection theorems for systems of sets.
J. London Math. Soc. 35 (1960), 85--90. 

\bibitem{ErdosandSarkozy:1992}
P.~Erd\H{o}s, A.~S\'{a}rk\"{o}zy,
Arithmetic progressions in subset sums.
Discrete Math. 102 (1992), no. 3, 249--264. 

\bibitem{ES}
P.~Erd\H{o}s, H.N.~Shapiro,
On the least primitive root of a prime.
Pacific J. Math. 7 (1957), 861--865.

\bibitem{Erdos-Szemeredi:1983}
P.~Erd\H{o}s, E.~Szemer\'{e}di,
On sums and products of integers. 
Studies in pure mathematics, 213--218, Birkh\"{a}user, Basel, 1983. 

\bibitem{ErdosandTuran:1936}
P. Erd\H{o}s, P. Tur\'{a}n,
On some sequences of integers, J. London Math. Soc. 11 (1936), 261--264.

\bibitem{Gallagher:1971} 
P.X.~Gallagher, A larger sieve. Acta Arith. 18 (1971), 77--81.

\bibitem{Gowers:2011}
W.T.~Gowers, A new proof of Szemer\'{e}di's theorem.
Geom. Funct. Anal. 11 (2001), 465--588.

\bibitem{Granville:1998}
A.~Granville, ABC Allows Us to Count Squarefrees,
Internat. Math. Res. Notices 19 (1998), 991--1009.

\bibitem{Green:2002}
B.~Green, Arithmetic progressions in sumsets. Geom. Funct. Anal. 12 (2002), no. 3, 584--597.


\bibitem{GreenandTao:2008}
B.J.~Green and T.~Tao,
The primes contain arbitrarily long arithmetic progressions.
Annals of Mathematics 167 (2008), 481--547.

\bibitem{GundersonandRodlandSidorenko:1999}
D.S.~Gunderson, V.~R\"{o}dl and A.~Sidorenko, 
Extremal problems for sets forming Boolean algebras
and complete partite hypergraphs. J. Combin. Theory Ser. A 88 (1999), 342--367.

\bibitem{Gyarmati:2001}
K.~Gyarmati,  On a problem of Diophantus. Acta Arith.
97 (2001), 53--65.

\bibitem{GSS:2003}
K.~Gyarmati, A.~S\'{a}rk\"{o}zy, 
C.L. Stewart, On sums which are powers. Acta Math. Hungar.
99 (2003), 1--24.

\bibitem{GyoriHajduPinter:2009}
K.~Gy\"ory, L.~Hajdu and \'{A}.~Pint\'{e}r, 
Perfect powers from products of consecutive terms in arithmetic
progression.
Compositio Math. 145 (2009), 845--864.

\bibitem{Hegyvari:1996}
N.~Hegyv\'{a}ri, 
On representation problems in the additive number theory, 
Acta Math. Hungar. 72 (1996), 35--44.

\bibitem{Hegyvari:1999}
N.~Hegyv\'{a}ri, 
On the dimension of the Hilbert cubes. J. Number Theory 77 (1999), 326--330.

\bibitem{HegyvariandSarkozy:1999}
N.~Hegyv\'ari, A.~S\'ark\"ozy, On Hilbert cubes in certain sets. The
Ramanujan Journal 3 (1999), 303--314.

\bibitem{Henriot:2013} 
K.~Henriot, On arithmetic progressions in $A+B+C$,
Int. Math. Res. Notices (2014) 18: 5134--5164. 


\bibitem{Hilbert:1892}
D.~Hilbert, \"{U}ber die Irreduzibilit\"{a}t 
ganzer rationaler Funktionen mit ganzzahligen Koeffizienten.
J. Reine Angew. Math. 110 (1892), 104--129.

\bibitem{IK}
H.~Iwaniec, E.~Kowalski,
Analytic Number Theory, American Mathematical Society, Colloquium
Publications, 53. Providence (2004).

\bibitem{KhalfalahandSzemeredi:2006}
A.~Khalfalah,  E.~Szemer\'{e}di, 
On the number of monochromatic solutions of $x+y=z^2$.
Combin. Probab. Comput. 15 (2006), no. 1-2, 213--227.

\bibitem{Kostochka:1996}
A.V.~Kostochka,
An intersection theorem for systems of sets.
Random Structures Algorithms 9 (1996), no. 1-2, 213--221. 

\bibitem{LagariasandOdlyzkoandShearer:1983}
J.C.~Lagarias, A.M.~Odlyzko and J.B.~Shearer,
On the density of sequences of integers the sum of no two of which is a 
square. II. General sequences.
J. Combin. Theory Ser. A 34 (1983), no. 2, 123--139. 

\bibitem{Masser}
D.~W.~Masser, Open problems.
Proceedings of the Symposium on Analytic Number Theory,
London, Imperial College (1985).

\bibitem{NguyenandVu:2010}
H.H.~Nguyen,  V.H.~Vu, 
Squares in sumsets.  An irregular mind, 491--524, 
Bolyai Soc. Math. Stud., 21, J\'{a}nos Bolyai Math. Soc., Budapest, 2010.

\bibitem{RSS}
J.~Rivat, A.~S\'{a}rk\"{o}zy, C.L.~Stewart,
Congruence properties of the $\Omega$-function on sumsets.
Illinois J. Math. 43 (1999), no. 1, 1--18. 

\bibitem{Ruzsa:1991}
I.Z.~Ruzsa, Arithmetic progressions in sumsets.
Acta Arith. 60 (1991), no. 2, 191--202. 

\bibitem{Sarkozy:1994}
A.~S\'{a}rk\"{o}zy,
Finite addition theorems. II.
J. Number Theory 48 (1994), no. 2, 197--218. 

\bibitem{ST}
A.~Schinzel, R.~Tijdeman,
On the equation $y^m=P(x)$. Acta Arith. XXXI (1976), 199--204.

\bibitem{Schoen:2011}
T.~Schoen, Arithmetic progressions in sums of subsets of sparse sets.
 Acta Arith. 147 (2011), no. 3, 283--289.

\bibitem{Setzer:1979}
B.~Setzer,
Arithmetic progressions in the values of a quadratic polynomial.
Rocky Mountain J. Math. 9 (1979), no. 2, 361--363. 

\bibitem{Solymosi:2007}
J.~Solymosi, Elementary Additive Combinatorics. 
In ``Additive Combinatorics'', CRM Proceedings \& Lecture Notes,
vol.~43 (2007), 29--38.

\bibitem{Szemeredi:1969}
E.~Szemer\'{e}di,
On sets of integers containing no four elements in arithmetic progression.
Acta Math. Acad. Sci. Hung. 20 (1969), 89--104. 

\bibitem{SzemerediandVu:2006}
E.~Szemer\'{e}di, V.H.~Vu.
Finite and infinite arithmetic progressions in sumsets.
Ann. of Math. (2) 163 (2006), no. 1, 1--35. 

\bibitem{SzemerediandVu:2006JAMS}
E.~Szemer\'{e}di, V.H.~Vu,
Long arithmetic progressions in sumsets: thresholds and bounds.
J. Amer. Math. Soc. 19 (2006), no. 1, 119--169. 

\bibitem{Walsh:1999}
P.G.~Walsh, On a conjecture of Schinzel and Tijdeman. 
Number Theory in Progress
(proceedings of a conference in honour of the sixtieth birthday 
of A. Schinzel, Zakopane, Poland, 1997),
 p. 577-582, Walter de Gruyter, 1999.

\bibitem{Woods:2004}
A. R. Woods,
 Subset sum ``cubes'' and the complexity of primality testing,
Theoretical Computer Science
Volume 322 (1) (2004),  203--219.

\end{thebibliography}
\end{document}